\renewcommand{\orcid}[1]{\href{https://orcid.org/#1}{\textcolor[HTML]{A6CE39}{orcid.org/#1}}}
\setlist[enumerate]{leftmargin=.5in}
\setlist[itemize]{leftmargin=.5in}
\crefname{hypothesis}{Hypothesis}{Hypotheses}
\title{Construction and application of provable positive and exact cubature formulas
\thanks{
\monthyeardate\today 
\funding{This work was partially supported by AFOSR \#F9550-18-1-0316 and ONR \#N00014-20-1-2595.}
}}
\author{Jan Glaubitz\thanks{Department of Mathematics, Dartmouth College, Hanover, NH 03755, USA (\email{Jan.Glaubitz@Dartmouth.edu}, \orcid{0000-0002-3434-5563})}
}
\newcommand{\Span}{\mathrm{span}}
\newenvironment{eq}{\begin{equation}}{\end{equation}} 
\DeclareMathOperator{\rank}{rank}
\DeclareMathOperator{\diag}{diag}
\DeclareMathOperator*{\argmin}{arg\,min} 
\newcommand{\scp}[2]{\left\langle{#1,\, #2}\right\rangle} 
\newcommand{\intd}{\, \mathrm{d}}
\newcommand{\N}{\mathbb{N}}
\newcommand{\R}{\mathbb{R}}
\begin{document}

\maketitle

\begin{abstract}
Many applications require multi-dimensional numerical integration, often in the form of a cubature formula.  
These cubature formulas are desired to be positive and exact for certain finite-dimensional function spaces (and weight functions). 
Although there are several efficient procedures to construct positive and exact cubature formulas for many standard cases, it remains a challenge to do so in a more general setting.  
Here, we show how the method of least squares can be used to derive provable positive and exact formulas in a general multi-dimensional setting. 
Thereby, the procedure only makes use of basic linear algebra operations, such as solving a least squares problem. 
In particular, it is proved that the resulting least squares cubature formulas are ensured to be positive and exact if a sufficiently large number of equidistributed data points is used. 
We also discuss the application of provable positive and exact least squares cubature formulas to construct nested stable high-order rules and positive interpolatory formulas. 
Finally, our findings shed new light on some existing methods for multi-variate numerical integration and under which restrictions these are ensured to be successful.  
\end{abstract}

\begin{keywords}
	Multivariate integration, numerical integration, quadrature, cubature, least squares, discrete orthogonal functions, equidistributed sequence, low-discrepancy sequence, Caratheodory--Tchakaloff measure
\end{keywords}

\begin{AMS}
	65D30, 65D32, 41A55, 41A63, 42C05
\end{AMS}

\begin{DOI}
    \url{https://doi.org/10.1093/imanum/drac017}
\end{DOI}

\section{Introduction} 
\label{sec:introduction} 

Numerical integration is an omnipresent technique in applied mathematics, engineering, and many other sciences. 
Prominent examples include numerical differential equations \cite{hesthaven2007nodal,ames2014numerical}, machine learning \cite{murphy2012machine}, finance \cite{glasserman2013monte}, and biology \cite{manly2006randomization}.

\subsection{Problem Statement}

In many cases, the problem can be formulated as follows. 
Let $d \geq 2$ and $\Omega \subset \R^d$ be bounded with positive volume and boundary of measure zero (in the sense of Lebesgue).
Given a function $f:\Omega \to \R$, we seek to approximate the continuous integral 
\begin{equation}\label{eq:int}
	I[f] := \int_{\Omega} f(\boldsymbol{x}) \omega(\boldsymbol{x}) \intd \boldsymbol{x} 
\end{equation} 
with Riemann integrable \emph{weight function} $\omega: \Omega \to \R^+_0$ that is assumed to be positive almost everywhere. 
A prominent approach is to approximate \cref{eq:int} by a weighted finite sum over the function values $f(\mathbf{x}_n)$ at some \emph{data points} $\mathbf{x}_1,\dots,\mathbf{x}_N$, denoted by 
\begin{equation}\label{eq:CF}
	C_N[f] := \sum_{n=1}^N w_n f(\mathbf{x}_n). 
\end{equation}
Usually, $C_N$ is referred to as an \emph{$N$-point cubature formula (CF)} and ${w_1,\dots,w_N \in \R}$ are called \emph{cubature weights}. 
For a 'good' CF, the following properties are often required:
\begin{enumerate}[label=(P\arabic*)] 
	\item \label{item:P1}
	All data points should lie inside of $\Omega$. That is, $\mathbf{x}_n \in \Omega$ for all $n=1,\dots,N$. 
	
	\item \label{item:P2} 
	The CF should be \emph{positive}. 
	That is, $w_n > 0$ for all $n=1,\dots,N$.\footnote{Some authors require the cubature weights to only be nonnegative. 
However, if $w_n = 0$, then the corresponding data point can---and should---be removed from the CF to avoid an unnecessary loss of efficiency.} 

\end{enumerate}  
See \cite{engels1980numerical,cools1997constructing,davis2007methods} and references therein.
Moreover, in many applications, CFs are desired which are exact for certain finite-dimensional function spaces. 
Let $\mathcal{F}_K(\Omega)$ denote a $K$-dimensional function space spanned by ${\varphi_1,\dots,\varphi_K:\Omega \to \R}$. 
It shall be assumed that all the moments $m_k := I[\varphi_k]$, $k=1,\dots,K$, exist.
Then, the $N$-point CF \cref{eq:CF} is said to be \emph{$\mathcal{F}_K(\Omega)$-exact} if 
\begin{equation}\label{eq:exactness}
	C_N[f] = I[f] \quad \forall f \in \mathcal{F}_K(\Omega). 
\end{equation} 
Usual choices for $\mathcal{F}_K(\Omega)$ include the space of all algebraic and trigonometric polynomials up to a certain degree $m$, respectively denoted by $\mathbb{P}_m(\R^d)$ and $\Pi_m(\R^d)$. 
Another prominent example is approximation spaces of radial basis functions  \cite{sommariva2006numerical,fuselier2014kernel,reeger2018numerical,sommariva2021rbf}.

\subsection{Previous Works}

The existence of positive and $\mathcal{F}_K(\Omega)$-exact CFs with at most $K$ data points is ensured by the following theorem originating from \cite{tchakaloff1957formules}. 
Also see \cite{davis1968nonnegative,bayer2006proof}.
\begin{theorem}[Tchakaloff, 1957]\label{thm:Tchakaloff}
	Given is \cref{eq:int} with nonnegative $\omega$ and a $K$-dimensional function space $\mathcal{F}_K(\Omega)$. 
	Then there exist $N$ data points $\mathbf{x}_1,\dots,\mathbf{x}_N \in \Omega$ and positive weights $w_1,\dots,w_N$ with $N \leq K$ such that the corresponding $N$-point CF is $\mathcal{F}_K(\Omega)$-exact. 
\end{theorem}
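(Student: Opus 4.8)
The plan is to recast exactness as a geometric membership problem and then invoke Carathéodory's theorem for convex cones. Collect the basis functions into the moment map $\Phi:\Omega\to\R^K$, $\Phi(\mathbf{x}):=(\varphi_1(\mathbf{x}),\dots,\varphi_K(\mathbf{x}))^\top$, and write $\mathbf{m}:=(m_1,\dots,m_K)^\top$ for the moment vector. Since \cref{eq:exactness} is linear in $f$, it suffices to enforce it on the basis, so an $N$-point CF is $\mathcal{F}_K(\Omega)$-exact with positive weights precisely when $\mathbf{m}=\sum_{n=1}^N w_n \Phi(\mathbf{x}_n)$ for some $\mathbf{x}_n\in\Omega$ and $w_n>0$. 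In other words, the assertion is equivalent to showing that $\mathbf{m}$ lies in the convex cone generated by $V:=\Phi(\Omega)$ and that a representation using at most $K$ generators exists.

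First I would show that $\mathbf{m}$ belongs to the closed convex cone $C:=\overline{\mathrm{cone}}(V)$. By conic duality it is enough to check that $\mathbf{m}$ lies in every supporting half-space, i.e.\ that $\scp{\mathbf{a}}{\mathbf{m}}\ge 0$ for every $\mathbf{a}$ in the dual cone $C^\ast=\{\mathbf{a}\in\R^K:\scp{\mathbf{a}}{\Phi(\mathbf{x})}\ge 0 \ \forall \mathbf{x}\in\Omega\}$. This follows immediately from the nonnegativity of $\omega$: for such an $\mathbf{a}$ the integrand $\scp{\mathbf{a}}{\Phi(\mathbf{x})}\,\omega(\mathbf{x})$ is nonnegative on $\Omega$, whence $\scp{\mathbf{a}}{\mathbf{m}}=\int_\Omega \scp{\mathbf{a}}{\Phi(\mathbf{x})}\,\omega(\mathbf{x})\intd\mathbf{x}\ge 0$. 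Since a closed convex cone equals its bidual, this places $\mathbf{m}$ in $C$.

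Next I would reduce the number of points. Once $\mathbf{m}$ is written as a nonnegative combination of finitely many generators, Carathéodory's theorem for cones lets me keep only linearly independent ones: take a representation with the fewest terms, and observe that any linear dependence among the selected $\Phi(\mathbf{x}_n)$ could be used to drive one coefficient to zero while preserving nonnegativity, contradicting minimality. As the $\Phi(\mathbf{x}_n)$ live in $\R^K$, at most $K$ of them can be linearly independent, giving $N\le K$; minimality simultaneously forces all retained weights to be strictly positive, yielding \ref{item:P2}.

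The main obstacle is the gap between the closed cone $C$ and the cone actually generated by $V$: membership in $C$ only guarantees $\mathbf{m}$ as a limit of conic combinations, not an exact finite one with generators drawn from $V$ itself. I would close this gap with a compactness argument. Assuming (as is standard) that the $\varphi_k$ extend continuously to the compact closure $\overline{\Omega}$, the image $\Phi(\overline{\Omega})$ is compact, so $\mathrm{cone}(\Phi(\overline{\Omega}))$ is already closed and coincides with $C$; Carathéodory then produces generators $\Phi(\mathbf{x}_n)$ with $\mathbf{x}_n\in\overline{\Omega}$. Because $\partial\Omega$ has measure zero and $\omega>0$ almost everywhere, a final perturbation or restriction to the support of the measure $\omega\intd\mathbf{x}$ places the nodes inside $\Omega$, securing \ref{item:P1}. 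For discontinuous or merely Riemann-integrable bases the compactness step would instead have to be replaced by a direct support argument in the spirit of the measure-theoretic proofs of Tchakaloff's theorem.
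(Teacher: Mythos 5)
The paper does not actually prove this theorem; it quotes it from the literature (Tchakaloff 1957, Davis 1968, Bayer--Teichmann 2006) and explicitly remarks that the original proof via convex bodies is non-constructive. So there is no in-paper argument to compare against, and your proposal has to stand on its own. The overall route you take---recasting exactness as membership of the moment vector $\mathbf{m}$ in the convex cone generated by $\Phi(\Omega)$, establishing membership in the \emph{closed} cone by biduality, and then reducing to at most $K$ nodes with strictly positive weights by Carath\'eodory's theorem for cones---is indeed the standard modern proof skeleton, and the duality and Carath\'eodory steps are correct as written.

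The genuine gap is the sentence ``the image $\Phi(\overline{\Omega})$ is compact, so $\mathrm{cone}(\Phi(\overline{\Omega}))$ is already closed.'' This is false in general: the conical hull of a compact set need not be closed. For example, with $S=\{(x,y)\in\R^2 : x^2+(y-1)^2\le 1\}$ one has $\mathrm{cone}(S)=\{(x,y):y>0\}\cup\{(0,0)\}$, which is not closed. Since this is exactly the step that upgrades ``$\mathbf{m}$ is a limit of finite conic combinations'' to ``$\mathbf{m}$ is a finite conic combination,'' the argument as written only proves the approximate version of the theorem. The standard repairs are: (i) use that $1\in\mathcal{F}_K(\Omega)$ (the paper's restriction \ref{item:restr_space}), so that one coordinate of $\Phi$ is identically $1$; then $0\notin\mathrm{conv}(\Phi(\overline{\Omega}))$, which \emph{does} force $\mathrm{cone}(\Phi(\overline{\Omega}))$ to be closed (equivalently, normalize by the total mass and work with the convex hull of a compact set, which is compact by Carath\'eodory and hence closed); or (ii) for spaces not containing constants, the Bayer--Teichmann induction on the dimension of the face of $\overline{\mathrm{cone}}(\Phi(\Omega))$ containing $\mathbf{m}$: either $\mathbf{m}$ is interior, in which case a dense convex subset with nonempty interior contains it, or it lies on a supporting hyperplane, which forces the measure to concentrate where the separating functional vanishes and reduces $K$. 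A secondary, smaller issue: your final ``perturbation'' placing the nodes in $\Omega$ rather than $\overline{\Omega}$ is asserted rather than argued; the measure-theoretic versions of the theorem only guarantee nodes in the support of $\omega\intd\boldsymbol{x}$, which is a closed set, so landing strictly inside $\Omega$ requires running the argument on $\Omega$ itself (where compactness is unavailable) or an explicit support analysis.
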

However, it should be noted that Tchakaloff's original proof is not constructive in nature. 
A constructive, yet in general not computational practical, prove was provided in \cite{davis1967construction}. 
At the same time, it should be pointed out that \cref{thm:Tchakaloff} only provides an upper bound for the number of data points that are needed for a positive and $\mathcal{F}_K(\Omega)$-exact CF. 
Indeed, for standard domains (e.\,g.\ $\Omega = [0,1]^d$) and weight functions (e.\,g.\ $\omega \equiv 1$) as well as classic function spaces (e.\,g.\ algebraic polynomials), it is possible to construct CFs that use even fewer points \cite{maeztu1989symmetric,bos2017polynomial,benouahmane2019near}. 
Such CFs are referred to as minimal or near-minimal CFs and usually utilize some kind of symmetry in the domain, weight function, and function space. 
Unfortunately, they are often limited to algebraic polynomials of low total degrees or specific domains and weight functions. 
We also mention Smolyak (sparse grid) CFs \cite{smolyak1963quadrature,bungartz2004sparse}, which rely on a product domain and a separable weight function.  
Another approach is the use of optimization-based methods \cite{taylor2007cardinal,ryu2015extensions,jakeman2018generation,keshavarzzadeh2018numerical} to construct near-minimal CFs (sometimes based on heuristic arguments). 
Unfortunately, the success of these methods often depends on the initial guess for the data points and they can sometimes yield points outside of the computational domain or negative cubature weights. 
We shall also mention some recent works on constructing nested sampling-based positive CFs \cite{van2020adaptive,van2020generating}. 
However, it should be noted that these are developed based on an approximate notion of exactness; in the sense that $I[f]$ in the exactness condition \cref{eq:exactness} is replaced by a discrete approximation $I^{(M)}[f] = \frac{1}{M} \sum_{m=1}^M f(\mathbf{y}_m)$ with $M > N$ and random samples $\mathbf{y}_m$, $m=1,\dots,M$. 
In fact, one might argue that the method proposed in \cite{van2020generating} is closer to subsampling \cite{davis1967construction,wilson1969general,wilhelmsen1976nearest,piazzon2017caratheodoryLS,van2017non}.
A similar argument can also be made for CFs based on nonnegative least squares (NNLS); see \cite{huybrechs2009stable,sommariva2015compression,glaubitz2020stableQRs}. 
Finally, a fairly general approach to construct ``probable" positive and exact CFs was proposed in \cite{migliorati2020stable}. 
The idea there was to derive randomized CFs based on first approximating the unknown integrand $f$ by a discrete least squares (LS) approximation \cite{cohen2013stability,cohen2017optimal,guo2020constructing} and to exactly integrate this approximation then. 
Thereby, the discrete LS approximation was based on the values of $f$ at random samples.  
The authors proved in \cite{migliorati2020stable} that the resulting randomized CFs are positive and exact with a high probability if the number of random samples is sufficiently large. 
To the best of our knowledge, this result has not been carried over to the deterministic setting (in the sense of the data points not coming from fully-probabilistic random samples) yet. 
We also mention the two works \cite{nakatsukasa2018approximate,hayakawa2021monte}, which we shall address in more detail below. 
In a nutshell, although many approaches exist to construct \emph{provable} positive and exact CFs for certain special cases, it remains a challenge to do so in a general deterministic setting.

\subsection{Our Contribution}

We propose a simple procedure to construct provable positive and exact CFs in a general setting. 
The procedure is based on the idea of least squares quadrature formulas (LS-QFs). 
These were introduced in \cite{wilson1970necessary,wilson1970discrete} (originally for equidistant points and $\omega \equiv 1$) and generalized in \cite{huybrechs2009stable,glaubitz2020stableQRs} (in one dimension though). 
Recently, LS formulas have been extended to higher dimensions in \cite{glaubitz2020stableCFs}. 
However, this was done under the restriction of the LS-CF being exact for algebraic polynomials up to a certain (total) degree, rather than a general finite-dimensional function space. 

The theoretical backbone of the present work is the extension of this result to a general class of finite-dimensional function spaces. 
We show that LS-CF can be ensured to be positive if a sufficiently large number of equidistributed data points is used. 
Numerically, we found that the number of data points $N$ has to scale roughly like the squared dimension of the function space $\mathcal{F}_K(\Omega)$. 
Moreover, we provide an error analysis for the corresponding positive and exact LS-CF, discuss potential applications, and address some connections to other integration methods.       

To prove that LS-CFs are positive, we leverage different tools from linear algebra, LS problems, discrete orthonormal functions, and equidistributed sequences from number theory. 
The sufficient conditions for this result (summarized in \cref{cor:main}) to hold, are the following: 
\begin{enumerate}[label=(R\arabic*)] 
	\item \label{item:restr_domain} 
	The integration domain $\Omega \subset \R^d$ is bounded with boundary of measure zero.

	\item \label{item:restr_weight} 
	The weight function $\omega: \Omega \to \R_0^+$ is Riemann integrable and positive almost everywhere. 

	\item \label{item:restr_space} 
	The function space $\mathcal{F}_K(\Omega)$ is spanned by a basis $\{ \varphi_k \}_{k=1}^K$ of continuous and bounded functions. 
	Furthermore, $\mathcal{F}_K(\Omega)$ contains constants. In particular, $1 \in \mathcal{F}_K(\Omega)$. 
	
\end{enumerate} 
\ref{item:restr_domain} essentially ensures the existence and simple construction of an equidistributed sequence in $\Omega$. 
\ref{item:restr_weight} warrants the existence of a certain continuous inner product (and corresponding orthonormal functions) that can be approximated by a sequence of discrete inner products (and corresponding discrete orthonormal functions). 
\ref{item:restr_space} is needed for technical reasons and is utilized in the proof of the preliminary \cref{lem:2}. 
Well-conditioned computation of the LS-CFs can be ensured if a basis $\{ \varphi_k \}_{k=1}^K$ for $\mathcal{F}_K(\Omega)$ of orthonormal functions is available (see \cref{rem:conditioning}).

\subsection{Implications and Potential Applications}

Our findings imply a simple\footnote{Indeed, the procedure only utilizes simple operations from linear algebra, such as solving an LS problem.} procedure to construct (deterministic) stable high-order CFs in a general setting. 
This can be interpreted as an extension of the results on stable high-order randomized CF from \cite{migliorati2020stable} to a deterministic setting. 
Moreover, we discuss the application of the provable positive and exact LS-CF to construct interpolatory CFs (see \cref{sub:interpol_CFs}). 
These use a smaller number of $N = K$ data points and can be obtained by several different subsampling strategies. 
It should also be noted that for a fixed function space $\mathcal{F}_K(\Omega)$ and an increasing number $N$ of data points, the LS-CFs discussed here might be interpreted as high-order corrections to Monte Carlo (MC)---for random data points---and quasi-Monte Carlo (QMC)---for low-discrepancy data points---methods. 
See \cref{rem:MC_correction} for more details. 
In particular, this reveals an interesting connection to \cite{nakatsukasa2018approximate} and potential applications of LS-CFs for variance reduction in MC and QMC methods. 
We shall also briefly mention the implication of the present work to several other approaches to find positive (interpolatory) CFs. 
These include NNLS \cite{huybrechs2009stable,sommariva2015compression,glaubitz2020stableQRs} and different optimization strategies \cite{glaubitz2020stableCFs,hayakawa2021monte}. 
Sometimes, these approaches are justified by Tchakaloff's theorem (\cref{thm:Tchakaloff}), which ensures the existence of a solution to the respective optimization problem if an appropriate set of data points is considered. 
That said, Tchakaloff's theorem is providing no information about which data points should be used (it only states their existence and an upper bound for their number). 
Hence, when applying the above-mentioned procedures without care to an arbitrary set of data points, they cannot always be expected to actually result in a positive interpolatory CF.  
The present work is providing such an ensurance in the sense that \cref{cor:main} combined with the subsampling strategies discussed in \cref{sub:interpol_CFs} is telling us that at least some of the positive interpolatory CFs predicted by Tchakaloff's theorem are supported on a sufficiently large set of equidistributed points in $\Omega$.

\subsection{Advantages and Pitfalls}

The advantage of the positive and exact CFs discussed in the present work lies in their generality and simple construction. 
That said, they are neither minimal nor near-minimal. 
Hence, if the reader is only interested in a certain standard case for which efficient CFs are readily available it is certainly advantageous to use these. 
The positive and exact CFs presented here, on the other hand, find their greatest utility when a CF is desired for a non-standard domain, weight function, or function space. 
They might also be of advantage when one is given a fixed and prescribed set of (scattered) data points, which is often the case in applications.   
Finally, for reasons of computational efficiency, in general, we only recommend using LS-CFs in moderate dimensions (usually $d=1,2,3$). 
In higher dimensions, they--- like many other CFs---might fall victim to the curse of dimensionality \cite{bellman1966dynamic,kuo2005lifting}.
Also see \cref{sec:error} for more details. 
Finally, the construction of the positive and exact CFs discussed here relies on knowledge of certain moments, which is discussed in \cref{rem:moments}.

\subsection{Outline} 

The rest of this work is organized as follows. 
In \cref{sec:prelim}, we collect some preliminaries, in particular, on unisolvent and equidistributed sequences. 
Next, \cref{sec:LS} contains our theoretical main results, i.\,e., exactness and conditional positivity of LS-CFs is proved. 
In \cref{sec:error}, an error analysis for these CFs is provided. 
Two specific applications of the provable positive and exact LS-CFs are discussed in \cref{sec:applications}. 
These include the simple construction of stable high-order sequences of CFs (\cref{sub:const_procedure}) and positive interpolatory CFs (\cref{sub:interpol_CFs}). 
Numerical experiments are presented in \cref{sec:numerical} and some concluding thoughts are offered in \cref{sec:summary}.   
\section{Preliminaries: Unisolvent and Equidistributed Sequences} 
\label{sec:prelim} 

Here, we shall provide a few preliminary results. 
In particular, these address the connection between the exactness of CFs and unisolvent and equidistributed sequences.

\subsection{Exactness and Unisolvence}
\label{sub:prelim_unisolvent}

Let $\{\varphi_k\}_{k=1}^K$ be a basis of the function space $\mathcal{F}_K(\Omega)$ and $m_k = I[\varphi_k]$, $k=1,\dots,K$, the corresponding \emph{moments}. 
Then the exactness condition \cref{eq:exactness} is equivalent to the data points ${X_N = \{ \mathbf{x}_n\}_{n=1}^N}$ and weights of a CF solving the nonlinear system 
\begin{equation}\label{eq:ex-nonlin-system}
  	\underbrace{
  	\begin{pmatrix}
    		\varphi_1(\mathbf{x}_1) & \dots & \varphi_1(\mathbf{x}_N) \\ 
    		\vdots & & \vdots \\ 
    		\varphi_K(\mathbf{x}_1) & \dots & \varphi_K(\mathbf{x}_N)
  	\end{pmatrix}}_{=: \Phi(X_N)}
  	\underbrace{
  	\begin{pmatrix} 
    		w_1 \\ \vdots \\ w_N 
  	\end{pmatrix}}_{=: \mathbf{w}} 
  	= 
  	\underbrace{
  	\begin{pmatrix} 
    		m_1 \\ \vdots \\ m_K 
  	\end{pmatrix}}_{=: \mathbf{m}}.
\end{equation} 
However, solving \cref{eq:ex-nonlin-system} can be highly nontrivial, and doing so by brute force may result in a CF which points lie outside of the integration domain or with negative weights \cite{haber1970numerical}. 
That said, the situation changes if a fixed set of data points is considered. 
Then $\Phi(X_N) = \Phi$, and \cref{eq:ex-nonlin-system} becomes a linear system:
\begin{equation}\label{eq:ex-system}
  	\Phi \mathbf{w} = \mathbf{m}
\end{equation} 
Note that for $K < N$ this is an underdetermined linear system. 
These are well-known to either have no or infinitely many solutions. 
The latter case arises when we restrict ourselves to unisolvent nodes.  

\begin{definition}[Unisolvent Nodes] 
  The nodes $X_N = \{\mathbf{x}_n\}_{n=1}^N \subset \R^d$ are called \emph{$\mathcal{F}_K(\Omega)$-unisolvent} if 
  \begin{equation}
    f(\mathbf{x}_n) = 0, \ n=1,\dots,N \implies f(\boldsymbol{x}) = 0, \ \forall \boldsymbol{x} \in \Omega
  \end{equation} 
  holds for all $f \in \mathcal{F}_K(\Omega)$. 
\end{definition} 

Assuming that the set of data points $X_N$ is $\mathcal{F}_K(\Omega)$-unisolvent the following result follows. 

\begin{lemma}\label{lem:solution-space}
  Let  $K < N$ and $X_N = \{\mathbf{x}_n\}_{n=1}^N$ be $\mathcal{F}_K(\Omega)$-unisolvent. 
  Then, the linear system \cref{eq:ex-system} induces an $(N-K)$-dimensional 
affine linear subspace of solutions,  
  \begin{equation}\label{eq:sol-space}
    W := \left\{ \mathbf{w} \in \R^N \mid \Phi\mathbf{w}=\mathbf{m} \right\}.
  \end{equation}
\end{lemma}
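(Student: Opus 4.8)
The plan is to show that $W$, the solution set of the underdetermined linear system $\Phi\mathbf{w} = \mathbf{m}$, is an affine subspace whose dimension equals $N-K$. The standard linear-algebra fact is that if $\Phi\mathbf{w}=\mathbf{m}$ is consistent, then its solution set is a coset $\mathbf{w}_0 + \ker\Phi$, where $\mathbf{w}_0$ is any particular solution and $\ker\Phi = \{\mathbf{v}\in\R^N \mid \Phi\mathbf{v} = \mathbf{0}\}$. Since $\Phi$ is a $K\times N$ matrix, the rank--nullity theorem gives $\dim\ker\Phi = N - \rank\Phi$. Thus the whole argument reduces to two claims: first, that the system is consistent (so that $W$ is nonempty and genuinely an affine translate of $\ker\Phi$), and second, that $\rank\Phi = K$, so that $\dim W = \dim\ker\Phi = N-K$.

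First I would establish that $\rank\Phi = K$, which is where the $\mathcal{F}_K(\Omega)$-unisolvency of $X_N$ enters. The rows of $\Phi$ are the evaluation vectors $(\varphi_k(\mathbf{x}_1),\dots,\varphi_k(\mathbf{x}_N))$ for $k=1,\dots,K$. A nontrivial linear dependence among the rows, say $\sum_{k=1}^K c_k\varphi_k(\mathbf{x}_n) = 0$ for all $n$, is exactly the statement that the function $f = \sum_{k=1}^K c_k\varphi_k \in \mathcal{F}_K(\Omega)$ vanishes at every data point. Unisolvency then forces $f\equiv 0$ on $\Omega$, and since the $\varphi_k$ form a basis of $\mathcal{F}_K(\Omega)$, all $c_k=0$. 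Hence the $K$ rows of $\Phi$ are linearly independent and $\rank\Phi = K$ (using here $K < N$, so the rows can indeed be independent). This establishes full row rank.

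Next I would address consistency of $\Phi\mathbf{w}=\mathbf{m}$. Because $\Phi$ has full row rank $K$, its rows span all of $\R^K$, so the column space of $\Phi$ (equivalently the range of the map $\mathbf{w}\mapsto\Phi\mathbf{w}$) is all of $\R^K$; in particular it contains $\mathbf{m}$, so a solution $\mathbf{w}_0$ exists. With consistency in hand and $\rank\Phi = K$, the rank--nullity theorem yields $\dim\ker\Phi = N-K$, and therefore $W = \mathbf{w}_0 + \ker\Phi$ is an affine linear subspace of $\R^N$ of dimension $N-K$, as claimed.

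The argument is essentially routine once unisolvency is translated into linear independence of the rows of $\Phi$; I do not expect a genuine obstacle. The one step warranting care is the translation itself—making explicit that row dependence corresponds to a nonzero $f\in\mathcal{F}_K(\Omega)$ vanishing on $X_N$, and invoking that $\{\varphi_k\}_{k=1}^K$ is a \emph{basis} so that $\sum_k c_k\varphi_k\equiv 0$ implies $c_k=0$. I would also note in passing that full row rank simultaneously delivers both consistency and the correct nullity, so the two claims above are really two faces of the single fact $\rank\Phi = K$.
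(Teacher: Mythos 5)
Your proof is correct and is precisely the standard argument that the paper delegates to a citation (the polynomial case in an earlier work, with the remark that it carries over): unisolvency gives full row rank of $\Phi$, full row rank gives consistency, and rank--nullity gives $\dim W = N-K$. You have simply written out in full the details the paper leaves to the reference, with the key translation---row dependence of $\Phi$ corresponds to a function in $\mathcal{F}_K(\Omega)$ vanishing on $X_N$---handled correctly.
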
 

\begin{proof} 
	The case $\mathcal{F}_K(\Omega) = \mathbb{P}_m(\R^d)$ was shown in \cite{glaubitz2020stableCFs}. 
	It is easy to verify that the same arguments carry over to the general case discussed here. 
\end{proof} 

Next, a simple sufficient criterion for $X_N \subset \Omega$ to be $\mathcal{F}_K(\Omega)$-unisolvent is provided. 
The criterion is based on sequences that are dense in $\Omega \subset \R^d$. 
Recall that $(\mathbf{x}_n)_{n \in \N} \subset \R^d$ is called \emph{dense in $\Omega$} if 
\begin{equation}\label{eq:dense}
	\forall \boldsymbol{x} \in \Omega \ \, \forall \varepsilon > 0 \ \, \exists n \in \N: \quad 
	\norm{ \boldsymbol{x} - \mathbf{x}_n } < \varepsilon.
\end{equation}
That is, every point in $\Omega$ can be approximated arbitrarily accurate by an element of $(\mathbf{x}_n)_{n \in \N}$.\footnote{The condition \cref{eq:dense} is independent of the norm since $\Omega$ is located in a finite-dimensional space.}  

\begin{lemma}\label{lem:unisolvent}
	Let $(\mathbf{x}_n)_{n \in \N} \subset \Omega$ be dense in $\Omega$ and let $X_N = \{ \mathbf{x}_n \}_{n=1}^N$. 
	Moreover, let $\mathcal{F}_K(\Omega)$ be spanned by continuous functions $\varphi_1,\dots,\varphi_K:\Omega \to \R$. 
	Then there exists an $N_0 \in \N$ such that $X_N$ is $\mathcal{F}_K(\Omega)$-unisolvent for every $N \geq N_0$. 
\end{lemma}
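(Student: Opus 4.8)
The plan is to recast unisolvency as a rank condition and then exploit the monotonicity of this rank together with density and continuity. Writing $\boldsymbol{\varphi}(\mathbf{x}) := (\varphi_1(\mathbf{x}),\dots,\varphi_K(\mathbf{x}))^T$, every $f \in \mathcal{F}_K(\Omega)$ can be written as $f = \sum_{k=1}^K c_k \varphi_k$ for a unique coefficient vector $\mathbf{c} \in \R^K$, uniqueness holding because $\dim \mathcal{F}_K(\Omega) = K$ forces the spanning set $\{\varphi_k\}_{k=1}^K$ to be linearly independent. Since $f(\mathbf{x}_n) = \mathbf{c}^T \boldsymbol{\varphi}(\mathbf{x}_n)$ is the $n$-th entry of $\Phi(X_N)^T \mathbf{c}$, the set $X_N$ is $\mathcal{F}_K(\Omega)$-unisolvent precisely when $\Phi(X_N)^T \mathbf{c} = \mathbf{0}$ admits only $\mathbf{c} = \mathbf{0}$, i.e.\ when $\rank \Phi(X_N) = K$. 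Thus it suffices to show $\rank \Phi(X_N) = K$ for all sufficiently large $N$.

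First I would observe that $r_N := \rank \Phi(X_N)$ is nondecreasing in $N$, since increasing $N$ only appends the column $\boldsymbol{\varphi}(\mathbf{x}_{N+1})$ to $\Phi(X_N)$, while $r_N \le K$ always holds. A nondecreasing, integer-valued, bounded sequence is eventually constant, so there is an $N_0 \in \N$ with $r_N = r$ for all $N \ge N_0$ and some fixed $r \le K$. The goal is then to rule out $r < K$.

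Next I would argue by contradiction, assuming $r < K$. Because the rank no longer increases for $N \ge N_0$, each newly appended column already lies in the span of the previous ones; hence the column space of $\Phi(X_N)$ equals one fixed subspace $V \subsetneq \R^K$ of dimension $r$ for all $N \ge N_0$, and consequently $\boldsymbol{\varphi}(\mathbf{x}_n) \in V$ for every $n \in \N$. Choosing any $\mathbf{c} \in \R^K \setminus \{\mathbf{0}\}$ orthogonal to $V$ yields $f := \sum_{k=1}^K c_k \varphi_k$ with $f(\mathbf{x}_n) = \mathbf{c}^T \boldsymbol{\varphi}(\mathbf{x}_n) = 0$ for all $n \in \N$. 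Since each $\varphi_k$ is continuous, so is $f$, and since $(\mathbf{x}_n)_{n\in\N}$ is dense in $\Omega$ by \cref{eq:dense}, a continuous function vanishing on a dense subset must vanish on all of $\Omega$; hence $f \equiv 0$. Linear independence of $\{\varphi_k\}_{k=1}^K$ then forces $\mathbf{c} = \mathbf{0}$, contradicting the choice of $\mathbf{c}$. Therefore $r = K$, and $X_N$ is $\mathcal{F}_K(\Omega)$-unisolvent for every $N \ge N_0$.

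The main obstacle --- indeed the only genuinely nontrivial step --- is the passage from the statement that $f$ vanishes on the dense sequence to $f \equiv 0$ on $\Omega$, which is exactly where the continuity hypothesis on the $\varphi_k$ is indispensable; without it the direction $\mathbf{c}$ need not produce the zero function. The remaining ingredients (monotone bounded rank, stabilization of the column space, and existence of a nonzero vector orthogonal to a proper subspace) are routine linear algebra. As an alternative to the stabilization argument, one could instead take, for each $N$ violating unisolvency, a unit coefficient vector $\mathbf{c}_N$ with $\Phi(X_N)^T \mathbf{c}_N = \mathbf{0}$, extract a convergent subsequence $\mathbf{c}_{N_j} \to \mathbf{c}$ with $\mathbf{c} \neq \mathbf{0}$ by compactness of the unit sphere in $\R^K$, and pass to the limit using continuity; this produces the same nonzero $f$ vanishing on the entire sequence and hence the same contradiction.
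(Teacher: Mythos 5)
Your proof is correct, and it arrives at the same contradiction the paper uses: a nonzero continuous function vanishing on a dense subset of $\Omega$, which is impossible. The difference is one of rigor rather than strategy. The paper's proof of \cref{lem:unisolvent} asserts in one line that if the assertion fails, then there exists a single $f \in \mathcal{F}_K(\Omega)$, $f \not\equiv 0$, with $f(\mathbf{x}_n) = 0$ for \emph{all} $n \in \N$. Strictly speaking, the negation of the lemma only yields, for infinitely many $N$, some nonzero $f_N$ vanishing on $X_N$ (by monotonicity of unisolvency under inclusion, in fact for every $N$); passing from this family to a single $f$ vanishing on the entire sequence requires an additional argument. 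Your rank-stabilization step --- the rank of $\Phi(X_N)$ is nondecreasing and bounded by $K$, hence eventually constant, so the column spaces stabilize to a fixed subspace $V$ containing every $\boldsymbol{\varphi}(\mathbf{x}_n)$, and any nonzero $\mathbf{c} \perp V$ produces the required $f$ --- supplies exactly the missing link, as does the compactness alternative you sketch (normalize the $\mathbf{c}_N$ and extract a convergent subsequence on the unit sphere of $\R^K$). The translation of unisolvency into the condition $\rank \Phi(X_N) = K$ is also sound, since the $K$ spanning functions of a $K$-dimensional space are linearly independent. In short: same idea, but your write-up is the more complete of the two.
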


\begin{proof}
	Assume that the assertion is wrong. 
	Then, there exists an $f \in \mathcal{F}_K(\Omega)$ with $f \not\equiv 0$ such that ${f(\mathbf{x}_n) = 0}$ for all $n \in \N$. 
	Yet, since $(\mathbf{x}_n)_{n \in \N}$ is dense in $\Omega$, this either contradicts $f \not\equiv 0$ or $f$ being continuous. 
\end{proof}

If there exists an $N_0 \in \N$ such that $X_N = \{ \mathbf{x}_n \}_{n=1}^N$ is $\mathcal{F}_K(\Omega)$-unisolvent for every $N \geq N_0$, as in \cref{lem:unisolvent}, we say that $(\mathbf{x}_n)_{n \in \N}$ is an \emph{$\mathcal{F}_K(\Omega)$-unisolvent} sequence. 
Thus, \cref{lem:unisolvent} states that every dense sequence is also $\mathcal{F}_K(\Omega)$-unisolvent.

\subsection{Equidistributed Sequences}
\label{sub:prelim_equidistributed}

We just saw that density is a sufficient condition for unisolvence. 
This will be handy for the subsequent construction of provable positive and exact LS-CFs. 
Another important property will be for the sequence of data points $(\mathbf{x}_n)_{n \in \N} \subset \Omega$ to satisfy 
\begin{equation}\label{eq:equi} 
	\lim_{N \to \infty} \frac{|\Omega|}{N} \sum_{n=1}^N g(\mathbf{x}_n) 
		= \int_\Omega g(\boldsymbol{x}) \intd \boldsymbol{x} 
\end{equation} 
for all measurable bounded functions $g:\Omega \to \R$ that are continuous almost everywhere (in the sense of Lebesgue). 
Here, $|\Omega|$ denotes the $d$-dimensional volume of $\Omega$. 
Observe that $(\mathbf{x}_n)_{n \in \N}$ being dense in $\Omega$ is not sufficient for \cref{eq:equi} to hold.\footnote{However, every dense sequence can be rearranged into an (equidistributed) sequence satisfying \cref{eq:equi}.}
Yet, in \cite{weyl1916gleichverteilung} it was showed that \cref{eq:equi} can be connected to $(\mathbf{x}_n)_{n \in \N}$ being \emph{equidistributed} (also called \emph{uniformly distributed}). 
We shall recall that there are many well-known equidistributed sequences for $d$-dimensional hypercubes with radius $R$, denoted by $C_R^{(d)} = [-R,R]^d$. 
These include (i) grids of equally spaced points with an appropriate ordering and (ii) low-discrepancy sequences. 
The latter were developed to minimize the upper bound provided by the famous Koksma--Hlawak inequality \cite{hlawka1961funktionen,niederreiter1992random}, used in QMC methods \cite{caflisch1998monte,dick2013high,trefethen2017cubature}. 
A special case of low-discrepancy sequences are the Halton points \cite{halton1960efficiency}, which are a generalization of the one-dimensional van der Corput points, see for example \cite[Erste Mitteilung]{van1935verteilungsfunktionen}). 
To not exceed the scope of this work, we refer to the monograph \cite{kuipers2012uniform} for more details on equidistributed sequences.
That said, we shall demonstrate how equidistributed sequences can be constructed for general bounded domains with a boundary of measure zero. 

\begin{remark}[Construction of Equidistributed Sequences for General Domains]\label{rem:eq-points}
	Let ${\Omega \subset \R^d}$ be bounded with a boundary of measure zero. 
	Then we can find an $R > 0$ such that $\Omega$ is contained in the hypercube $C_R^{(d)}$. 
	Let $(\mathbf{y}_n)_{n \in \N}$ be an equidistributed sequence in $C_R^{(d)}$, then an equidistributed sequence in $\Omega$, $(\mathbf{x}_n)_{n \in \N}$, is given by the subsequence of $(\mathbf{y}_n)_{n \in \N}$ for which all elements outside of $\Omega$ have been removed:	
	\begin{equation}
		\mathbf{y}_n \in (\mathbf{x}_n)_{n \in \N} \iff \mathbf{y}_n \in \Omega.
	\end{equation} 
	Let us briefly verify that the resulting subsequence $(\mathbf{x}_n)_{n \in \N}$ is an equidistributed sequence in $\Omega$. 
	To this end, let $g:\Omega \to \R$ be a measurable bounded function that is continuous almost everywhere. 
	We can extend $g$ to the hypercube $C_R^{(d)}$ by setting $g$ equal to zero outside of $\Omega$. 
	This extension, denoted by $\tilde{g}: C_R^{(d)} \to \R$, is measurable, bounded, and continuous almost everywhere. 
	The latter follows from $\Omega$ having a boundary of measure zero. 
	Also observe that 
	\begin{eq}\label{eq:equid_aux}
		\lim_{M \to \infty} \frac{ | \{\mathbf{y}_1,\dots,\mathbf{y}_M\} \cap \Omega | }{M} = \frac{|\Omega|}{|C_R^{(d)}|}.
	\end{eq} 
	Then, by construction of $(\mathbf{x}_n)_{n \in \N}$ and \cref{eq:equid_aux}, we get  
	\begin{eq}\label{eq:const_eq}
		\lim_{N \to \infty} \frac{|\Omega|}{N} \sum_{n=1}^N g(\mathbf{x}_n) 
			= \lim_{M \to \infty} \frac{| C_R^{(d)} |}{M} \sum_{n=1}^M \tilde{g}(\mathbf{y}_n) 
			= \int_{C_R^{(d)}} \tilde{g}(\boldsymbol{x}) \intd \boldsymbol{x} 
			= \int_{\Omega} g(\boldsymbol{x}) \intd \boldsymbol{x}. 
	\end{eq} 
	Here, $M$ is the unique integer such that $\mathbf{x}_N = \mathbf{y}_M$, and $| \{\mathbf{y}_1,\dots,\mathbf{y}_M\} \cap \Omega | = N$.\footnote{The elements of $(\mathbf{y}_n)_{n \in \N}$ are assumed to be distinct.}
	Again, we refer to \cite{kuipers2012uniform} for more details.
\end{remark} 

Finally, it should be stressed that equidistributed sequences are dense sequences with a specific ordering. 
We close this section with the following corollary. 

\begin{corollary}\label{cor:equid}
	Let $\Omega \subset \R^d$ be bounded with a boundary of measure zero. 
	Furthermore, let $(\mathbf{y}_n)_{n \in \N}$ be an equidistributed sequence in the hypercube $C_R^{(d)}$, where $\Omega \subset C_R^{(d)}$, and let $(\mathbf{x}_n)_{n \in \N}$ be the subsequence of $(\mathbf{y}_n)_{n \in \N}$ that only contains the elements in $\Omega$. 
	Then $(\mathbf{x}_n)_{n \in \N}$ is equidistributed in $\Omega$ and $\mathcal{F}_K(\Omega)$-unisolvent. 
\end{corollary}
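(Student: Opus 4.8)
The plan is to combine the two preliminary ingredients developed in this section—the construction in \cref{rem:eq-points} and the unisolvency result in \cref{lem:unisolvent}—since \cref{cor:equid} is essentially a packaging of these facts into a single statement. I would first establish that $(\mathbf{x}_n)_{n \in \N}$ is equidistributed in $\Omega$, i.e.\ that it satisfies \cref{eq:equi}, and then separately establish $\mathcal{F}_K(\Omega)$-unisolvency. The two properties can be proved independently and then asserted jointly.

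For the equidistribution claim, I would reproduce the argument already sketched in \cref{rem:eq-points}. Given an arbitrary measurable bounded $g:\Omega \to \R$ that is continuous almost everywhere, I extend it by zero to a function $\tilde{g}: C_R^{(d)} \to \R$. The key observation is that $\tilde{g}$ remains measurable, bounded, and continuous almost everywhere: the only new discontinuities can occur on $\partial\Omega$, which has measure zero by the hypothesis on $\Omega$. Since $(\mathbf{y}_n)_{n \in \N}$ is equidistributed in $C_R^{(d)}$, Weyl's criterion (or rather its consequence for Riemann-integrable functions) gives
\begin{equation*}
	\lim_{M \to \infty} \frac{|C_R^{(d)}|}{M} \sum_{n=1}^M \tilde{g}(\mathbf{y}_n) = \int_{C_R^{(d)}} \tilde{g}(\boldsymbol{x}) \intd \boldsymbol{x} = \int_{\Omega} g(\boldsymbol{x}) \intd \boldsymbol{x}.
\end{equation*}
It then remains to pass from the full sequence $(\mathbf{y}_n)$ to the subsequence $(\mathbf{x}_n)$ consisting of the points that land in $\Omega$, which is exactly the first equality in \cref{eq:const_eq}.

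The step I expect to be the main obstacle—and the one deserving the most care—is precisely this subsequence bookkeeping. Letting $N = |\{\mathbf{y}_1,\dots,\mathbf{y}_M\} \cap \Omega|$, I must show that the rescaling constants match up in the limit, i.e.\ that $\frac{|\Omega|}{N}\sum_{n=1}^N g(\mathbf{x}_n)$ and $\frac{|C_R^{(d)}|}{M}\sum_{n=1}^M \tilde{g}(\mathbf{y}_n)$ have the same limit. Because $\tilde{g}$ vanishes outside $\Omega$, the two sums coincide, so the whole issue is the ratio of normalizations, which hinges on the density fact $\lim_{M \to \infty} N/M = |\Omega|/|C_R^{(d)}|$. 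This limit is itself an instance of equidistribution applied to the indicator $\mathbbm{1}_\Omega$, whose discontinuity set is again contained in $\partial\Omega$ and hence of measure zero—so it is legitimate to test the equidistributed sequence $(\mathbf{y}_n)$ against it. Finally, for the unisolvency assertion I would simply note that an equidistributed sequence is in particular dense in $\Omega$, so \cref{lem:unisolvent} applies directly (the spanning functions $\varphi_k$ are continuous by assumption \ref{item:restr_space}), yielding an $N_0$ beyond which every $X_N$ is $\mathcal{F}_K(\Omega)$-unisolvent. This completes the proof.
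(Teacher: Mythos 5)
Your proposal is correct and follows essentially the same route as the paper, which presents \cref{cor:equid} as a summary of the preceding material: the equidistribution claim is exactly the zero-extension argument of \cref{rem:eq-points} (including the normalization ratio $N/M \to |\Omega|/|C_R^{(d)}|$ via the indicator of $\Omega$), and the unisolvency claim follows from density together with \cref{lem:unisolvent}. No gaps.
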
 

\begin{proof} 
	The assertion that $(\mathbf{x}_n)_{n \in \N}$ is equidistributed in $\Omega$ follows from \cref{rem:eq-points}. 
	Finally, $(\mathbf{x}_n)_{n \in \N}$ being $\mathcal{F}_K(\Omega)$-unisolvent follows by every equidistributed sequence being dense and \cref{lem:unisolvent}. 
\end{proof} 
\section{Provable Positive and Exact Least Squares Cubature Formulas}
\label{sec:LS} 

In this section, it is demonstrated how provable positive and $\mathcal{F}_K(\Omega)$-exact LS-CFs can be constructed by using a sufficiently large set of equidistributed data points.  
This is done by generalizing the LS approach from \cite{wilson1970necessary,wilson1970discrete,huybrechs2009stable,glaubitz2020shock,glaubitz2020stableQRs,glaubitz2020stableCFs}. 
Finally, the procedure only relies on determining the LS solution of an underdetermined linear system.

\subsection{Formulation as a Least Squares Problem}
\label{sub:LS-problem}

Let $(\mathbf{x}_n)_{n \in \N}$ be a $\mathcal{F}_K(\Omega)$-unisolvent sequence in $\Omega$ and let $X_N = \{\mathbf{x}_n\}_{n=1}^N$. 
As noted before, an $\mathcal{F}_K(\Omega)$-exact CF with data points $X_N$ can be constructed by determining a vector of cubature weights that solves the linear system of exactness conditions \cref{eq:ex-system}. 
For $N > K$, \cref{eq:ex-system} induces an $(N-K)$-dimensional affine linear subspace space of solutions $W \subset \R^N$. 
All of these yield an $\mathcal{F}_K(\Omega)$-exact CF.
The LS approach consists of finding the unique solution $\mathbf{w} \in W$ that minimizes a weighted Euclidean norm: 
\begin{equation}\label{eq:LS-sol} 
	\mathbf{w}^{\mathrm{LS}} = \argmin_{\mathbf{w} \in W} \ \norm{ R^{-1/2} \mathbf{w} }_{2},
\end{equation} 
where $R^{-1/2}$ is a diagonal \emph{weight matrix}, given by 
\begin{equation} 
	R^{-1/2} = \diag\left( 1/\sqrt{r_1}, \dots, 1/\sqrt{r_N} \right), \quad 
	r_n > 0, \quad n=1,\dots,N.
\end{equation} 
The vector $\mathbf{w}^{\mathrm{LS}}$ in \cref{eq:LS-sol} is called the \emph{LS solution} of $\Phi \mathbf{w} = \mathbf{m}$. 
The corresponding $N$-point CF 
\begin{equation}\label{eq:LS-CF}
	C^{\mathrm{LS}}_N[f] = \sum_{n=1}^N w_n^{\mathrm{LS}} f(\mathbf{x}_n)
\end{equation}
is called an \emph{LS-CF}. 
In \cref{sub:positivity} it is shown that choosing the \emph{discrete weights} $r_n$ as 
\begin{equation} 
	r_n = \frac{\omega(\mathbf{x}_n) |\Omega|}{N}, \quad n=1,\dots,N,
\end{equation} 
results in the LS-CFs to be $\mathcal{F}_K(\Omega)$-exact and positive if $N$ is sufficiently large. 
At least formally, the LS solution is explicitly given by (\cite{cline1976l_2})
\begin{equation}\label{eq:LS-sol2}
  \mathbf{w}^{\mathrm{LS}} = R \Phi^T (\Phi R \Phi^T)^{-1} \mathbf{m}.    
\end{equation} 
Here, $R \Phi^T (\Phi R \Phi^T)^{-1}$ is the Moore--Penrose pseudoinverse of $R^{-1/2}\Phi$; see \cite{ben2003generalized}. 
In \cref{sub:char}, \cref{eq:LS-sol2} will be simplified by utilizing discrete orthonormal bases.

\begin{remark}[Computation of the Moments]\label{rem:moments}
	The construction of LS-CFs requires the computation of the moments $m_k = I[\varphi_k]$ for a basis $\{ \varphi_k \}_{k=1}^K$, which is a requirement for many CFs. 
	Depending on the domain $\Omega$, the weight function $\omega$, and the basis $\{ \varphi_k \}_{k=1}^K$, the exact evaluation of $m_k$ might be impractical. 
	However, we can always approximate $m_k$ by another unrelated CF, such as the QMC method, using a larger set of nodes. 
	In some cases, it might also be possible to use certain recurrence relations or differential/difference equations of the basis functions $\varphi_k$ to simplify the computation of the moments. 
\end{remark}

\subsection{Continuous and Discrete Orthonormal Bases}
\label{sub:ON-bases}

Under the restrictions \ref{item:restr_domain}--\ref{item:restr_space},
\begin{equation}\label{eq:cont-inner-prod}
  \scp{u}{v} = \int_{\Omega} u(\boldsymbol{x}) v(\boldsymbol{x}) \omega(\boldsymbol{x}) \intd \boldsymbol{x}, \quad 
  \norm{u} = \sqrt{\scp{u}{u}}
\end{equation} 
defines an inner product and a corresponding norm on $\mathcal{F}_K(\Omega)$.
In particular, \cref{eq:cont-inner-prod} allows to define an orthonormal basis $\{ \pi_k \}_{k=1}^K$ of $\mathcal{F}_K(\Omega)$. 
That is, the functions $\pi_1,\dots,\pi_K$ span $\mathcal{F}_K(\Omega)$ and  satisfy
\begin{equation}
  \scp{\pi_k}{\pi_l} = \delta_{k,l} := 
  	\begin{cases} 
		1 &; \ k = l, \\ 
		0 &; \ k \neq l,
	\end{cases} 
	\quad k,l = 1,\dots,K.
\end{equation} 
Henceforth, we refer to such a basis as a \emph{continuous orthonormal basis}.
%
Analogously, assuming that ${X_N^+ = \{ \, \mathbf{x}_n \mid \omega(\mathbf{x}_n) > 0, \ n=1,\dots,N \, \}}$ is $\mathcal{F}_K(\Omega)$-unisolvent,
\begin{equation}\label{eq:disc-inner-prod}
	[u,v]_N = \sum_{n=1}^N r_n u(\mathbf{x}_n) v(\mathbf{x}_n), \quad 
	\norm{u}_N = \sqrt{[u,u]_N}
\end{equation} 
defines a discrete inner product and a corresponding norm on $\mathcal{F}_K(\Omega)$. 
Also \cref{eq:disc-inner-prod} induces an orthonormal basis. 
This basis satisfies 
\begin{equation}
	[\pi_k,\pi_l]_N = \delta_{k,l}, \quad k,l = 1,\dots,K,
\end{equation} 
while spanning $\mathcal{F}_K(\Omega)$. 
It is therefore referred to as a \emph{discrete orthonormal basis} and its elements are denoted by $\pi_k^{(N)}$. 
%
Both bases can be constructed, for instance, by Gram--Schmidt orthonormalization applied to the same initial basis $\{\varphi_k\}_{k=1}^K$ \cite{gautschi1997numerical,trefethen1997numerical}:
\begin{equation}\label{eq:Gram-Schmidt}
\begin{aligned}
  \tilde{\pi}_k & = \varphi_k - \sum_{l=1}^{k-1} \scp{\varphi_k}{\pi_l^{(N)}} \pi_l, \quad 
  && \pi_k = \frac{\tilde{\pi}_k}{\norm{\tilde{\pi}_k}}, \\ 
  \tilde{\pi}^{(N)}_k & = \varphi_k - \sum_{l=1}^{k-1} [\varphi_k,\pi_l^{(N)}]_N \pi_l^{(N)}, \quad 
  && \pi_k^{(N)} = \frac{\tilde{\pi}_k^{(N)}}{ \norm{ \tilde{\pi}_k^{(N)} }_N }.
\end{aligned}
\end{equation} 

\begin{remark}
	Note that we only utilize Gram--Schmidt orthonormalization for theoretical purposes. 
	In our implementation, the LS solution $\mathbf{w}^{\mathrm{LS}}$ is computed using the Matlab function \emph{lsqminnorm}, which uses a pivoted QR decomposition of $A = \Phi R^{1/2}$; see \cite{trefethen1997numerical,golub2012matrix}. 
	The cost for this is $\mathcal{O}(N K^2)$. 
	While we have not tested this in our implementation, it should still be noted that also iterative solvers, such as the preconditioned conjugate gradient (PCG) method, might be used to reduce the costs to $\mathcal{O}(N K)$. 
	However, such methods usually rely on sparsity to be efficient and can be more prone to numerical inaccuracies. 
\end{remark}

\subsection{Characterization of the Least Squares Solution}
\label{sub:char}

We now collect one more important ingredient to subsequently prove the positivity of LS-CFs. 
Observe that the matrix product $\Phi R \Phi^T$ in the explicit representation of the LS solution \cref{eq:LS-sol2} can be interpreted as a Gram matrix with respect to the discrete inner product \cref{eq:disc-inner-prod}: 
\begin{equation}
  \Phi R \Phi^T = 
  \begin{pmatrix}
    [\varphi_1,\varphi_1]_N & \dots & 
    [\varphi_1,\varphi_K]_N \\ 
    \vdots & & \vdots \\ 
    [\varphi_K,\varphi_1]_N & \dots & 
    [\varphi_K,\varphi_K]_N \\ 
  \end{pmatrix}
\end{equation}
Thus, if the linear system \cref{eq:ex-system} is formulated with respect to the discrete orthonormal basis $\{\varphi_k^{(N)}\}_{k=1}^K$, one gets $\Phi R \Phi^T = I$, where $I$ denotes the $N \times N$ identity matrix. 
This yields \cref{eq:LS-sol2} to become  
\begin{equation}\label{eq:LS-sol3}
	\mathbf{w}^{\mathrm{LS}} = R \Phi^T \mathbf{m}.  
\end{equation}
In particular, the LS weights are then explicitly given by 
\begin{equation}\label{eq:LS-sol-explicit}
	w_n^{\mathrm{LS}} = r_n \sum_{k=1}^K \pi_k^{(N)}( \mathbf{x}_n) I[ \pi_k^{(N)} ], \quad n=1,\dots,N, 
\end{equation} 
where $I[ \pi_k^{(N)} ] = \int_{\Omega} \pi_k^{(N)}(\boldsymbol{x}) \intd \boldsymbol{x}$.

\subsection{Positivity of Least Squares Cubature Formulas}
\label{sub:positivity} 

We start by presenting two technical lemmas, which will enable us to show that the LS weights are all positive if a sufficiently large number of equidistributed data points is used. 

\begin{lemma}\label{lem:1}
	Let $\Omega \subset \R^d$ be bounded and assume that 
	\begin{equation}\label{eq:cond1}
		\lim_{N \to \infty} [u,v]_N = \scp{u}{v} \quad \forall u,v \in \mathcal{F}_K(\Omega).
	\end{equation} 
	Furthermore, let $(u_N)_{N \in \N}, (v_N)_{N \in \N} \subset \mathcal{F}_K(\Omega)$ and $u, v \in \mathcal{F}_K(\Omega)$ such that 
	\begin{equation}\label{eq:cond2}
		\lim_{N \to \infty} u_N = u, \quad 
		\lim_{N \to \infty} v_N = v \quad 
		\text{in } \ ( \mathcal{F}_K(\Omega), \| \cdot \|_{L^\infty(\Omega)} ),
	\end{equation} 
	where $u,v: \Omega \to \R$ are assumed to be bounded. 
	Then, 
	\begin{equation} 
		\lim_{N \to \infty} [u_N,v_N]_N = \scp{u}{v}.
	\end{equation}
\end{lemma}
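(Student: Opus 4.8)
My plan is to prove the statement by a standard continuity/triangle-inequality argument, reducing everything to the hypothesis \cref{eq:cond1} together with a uniform-in-$N$ bound on the discrete norm $\norm{\cdot}_N$. First I would split the quantity of interest as
\begin{equation}
	[u_N,v_N]_N - \scp{u}{v}
	= \Big( [u_N,v_N]_N - [u,v]_N \Big) + \Big( [u,v]_N - \scp{u}{v} \Big),
\end{equation}
and observe that the second bracket tends to zero directly by \cref{eq:cond1} applied to the fixed functions $u,v \in \mathcal{F}_K(\Omega)$. It therefore remains to show that the first bracket vanishes in the limit.

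For the first bracket I would telescope once more, writing
\begin{equation}
	[u_N,v_N]_N - [u,v]_N = [u_N - u,\, v_N]_N + [u,\, v_N - v]_N,
\end{equation}
and then apply the Cauchy--Schwarz inequality for the (positive semidefinite) discrete bilinear form $[\cdot,\cdot]_N$, yielding the bounds $\left| [u_N-u, v_N]_N \right| \le \norm{u_N - u}_N \norm{v_N}_N$ and $\left| [u, v_N-v]_N \right| \le \norm{u}_N \norm{v_N - v}_N$. Since $\mathcal{F}_K(\Omega)$ is a vector space, each of $u_N-u$, $v_N-v$, $u$, $v_N$ lies in $\mathcal{F}_K(\Omega)$, so all these discrete norms are well defined.

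The crux of the argument is a uniform bound on the discrete norm. I would use the restriction \ref{item:restr_space} that $1 \in \mathcal{F}_K(\Omega)$: applying \cref{eq:cond1} to $u = v = 1$ shows that $[1,1]_N \to \scp{1}{1} = \int_\Omega \omega \intd \boldsymbol{x}$, so the sequence $\big( [1,1]_N \big)_{N \in \N}$ is convergent and hence bounded, say by $C < \infty$. For any $w \in \mathcal{F}_K(\Omega)$ this gives
\begin{equation}
	\norm{w}_N^2 = \sum_{n=1}^N r_n\, w(\mathbf{x}_n)^2
	\le \norm{w}_{L^\infty(\Omega)}^2 \sum_{n=1}^N r_n
	= \norm{w}_{L^\infty(\Omega)}^2\, [1,1]_N
	\le C\, \norm{w}_{L^\infty(\Omega)}^2,
\end{equation}
i.e.\ $\norm{w}_N \le \sqrt{C}\, \norm{w}_{L^\infty(\Omega)}$ uniformly in $N$. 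Feeding this into the two Cauchy--Schwarz bounds, the factors $\norm{u_N - u}_N$ and $\norm{v_N - v}_N$ tend to zero by the $L^\infty$-convergence \cref{eq:cond2}, while $\norm{v_N}_N$ and $\norm{u}_N$ stay bounded (the former because $\norm{v_N}_{L^\infty(\Omega)}$ is bounded, being a convergent sequence). Hence the first bracket tends to zero, and combining the two brackets yields $\lim_{N \to \infty} [u_N,v_N]_N = \scp{u}{v}$.

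The main obstacle is precisely the uniform bound on $\norm{\cdot}_N$: without it the cross terms could in principle blow up as $N \to \infty$. I expect this to be the only nontrivial step, and it is handled cleanly by exploiting that constants belong to $\mathcal{F}_K(\Omega)$, which converts the convergence of $[1,1]_N$ into the operator bound above. The boundedness of the elements of $\mathcal{F}_K(\Omega)$ (again from \ref{item:restr_space}) is what makes the passage to the $L^\infty$ norm legitimate.
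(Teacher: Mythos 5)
Your proof is correct and follows essentially the same route as the paper's: a triangle-inequality/telescoping decomposition, the Cauchy--Schwarz inequality for the discrete bilinear form, and the bound $\norm{w}_N \leq \norm{1}_N \norm{w}_{L^\infty(\Omega)}$ (which requires $1 \in \mathcal{F}_K(\Omega)$ and the convergence of $[1,1]_N$, exactly as in the paper's H\"older step). Your version merely makes the uniform-in-$N$ bound on the discrete norm more explicit, which is a welcome clarification rather than a different argument.
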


Recall that $[\cdot,\cdot]_N$ and $\scp{\cdot}{\cdot}$ denote the continuous and discrete inner product \cref{eq:cont-inner-prod} and \cref{eq:disc-inner-prod}, respectively. 
The corresponding norms are denoted by $\|\cdot\|$ and $\|\cdot\|_N$.
Moreover, $\| \cdot \|_{L^\infty(\Omega)}$ is the usual supremum norm with $\norm{f}_{L^\infty(\Omega)} = \sup_{\boldsymbol{x} \in \Omega} |f(\boldsymbol{x})|$.

\begin{proof} 
  	We start by noting that 
  	\begin{equation} 
  	\begin{aligned}
    		\left| \scp{u}{v} - [u_N,v_N]_N \right| 
      		\leq & \left| \scp{u}{v} - [u,v]_N \right| + \left| [u,v]_N - [u_N,v]_N \right| \\ 
			& + \left| [u_N,v]_N - [u_N,v_N]_N \right|. 
	\end{aligned}
  	\end{equation}
  	The first term on the right-hand side converges to zero due to \cref{eq:cond1}. 
  	For the second term, the Cauchy--Schwarz inequality gives 
  	\begin{equation}
    		\left| [u,v]_N - [u_N,v]_N \right|^2  
      		= \left| [u-u_N,v]_N \right|^2  
      		\leq \norm{ u-u_N }_N^2 \norm{ v }_N^2.
  	\end{equation} 
  	Furthermore, \cref{eq:cond1} implies $\| v \|_N^2 \to \| v \|^2$ for $N \to \infty$.
  	Finally, the H\"older inequality and \cref{eq:cond2} yield 
  	\begin{equation}
    		\norm{ u-u_N }_N^2 \leq \norm{ 1 }_N^2 \norm{ u-u_N }_{L^\infty(\Omega)}^2 \to 0, 
    		\quad N \to \infty.
  	\end{equation}
  	Thus, the second term converges to zero as well. 
  	A similar argument can be used to show that the third term converges to zero. 
\end{proof}

Next, we demonstrate that the discrete orthonormal functions $\pi_k^{(N)}$ converge uniformly to the corresponding continuous orthonormal functions $\pi_k$ if the corresponding discrete inner product converges to the continuous one for all elements of $\mathcal{F}_K(\Omega)$. 

\begin{lemma}\label{lem:2}
	Let $\Omega \subset \R^d$ be bounded, let $\{ \varphi_k \}_{k=1}^K$ be a basis of $\mathcal{F}_K(\Omega)$ consisting of continuous and bounded functions, and assume that \cref{eq:cond1} holds. 
	Moreover, let $\{ \pi_k \}_{k=1}^K$ and $\{ \pi_k^{(N)} \}_{k=1}^K$ respectively denote the continuous and discrete orthonormal bases constructed from $\{ \varphi_k \}_{k=1}^K$ by Gram--Schmidt orthonormalization \cref{eq:Gram-Schmidt}. 
	Then, 
	\begin{equation} 
		\lim_{N \to \infty} \pi_k^{(N)} = \pi_k \quad 
		\text{in } \ ( \mathcal{F}_K(\Omega), \|\cdot\|_{L^\infty(\Omega)} ).
	\end{equation} 
\end{lemma}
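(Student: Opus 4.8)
The plan is to argue by induction on $k$, mirroring the recursive structure of the Gram--Schmidt orthonormalization \cref{eq:Gram-Schmidt}. The engine for every step is \cref{lem:1}: whenever the two arguments of the discrete inner product converge uniformly to bounded limits, the discrete inner product converges to the continuous one. I would repeatedly feed the inductively established uniform convergence of the lower-order orthonormal functions into \cref{lem:1} to obtain convergence of the Gram--Schmidt coefficients and of the normalizing constants, and then combine these scalar limits with the uniform convergence of the functions themselves. Note that the admissibility of \cref{lem:1} here rests on $1 \in \mathcal{F}_K(\Omega)$ (restriction~\ref{item:restr_space}), which controls the factor $\norm{1}_N$ appearing in its proof.

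For the base case $k=1$, both $\tilde{\pi}_1(\cdot;\omega) = \varphi_1$ and $\tilde{\pi}_1(\cdot;\mathbf{r}) = \varphi_1$ (empty sum in \cref{eq:Gram-Schmidt}), so only the normalizations differ. Applying \cref{lem:1} with the constant sequences $u_N = v_N = \varphi_1$ gives $\norm{\varphi_1}_N^2 = [\varphi_1,\varphi_1]_N \to \scp{\varphi_1}{\varphi_1} = \norm{\varphi_1}^2$, hence $\norm{\varphi_1}_N \to \norm{\varphi_1} > 0$. Since $\varphi_1$ is bounded, dividing it by the scalar $\norm{\varphi_1}_N$ yields $\pi_1(\cdot;\mathbf{r}) = \varphi_1/\norm{\varphi_1}_N \to \varphi_1/\norm{\varphi_1} = \pi_1(\cdot;\omega)$ in $L^\infty(\Omega)$.

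For the inductive step, suppose $\pi_l(\cdot;\mathbf{r}) \to \pi_l(\cdot;\omega)$ uniformly for all $l < k$. First, for each such $l$, I apply \cref{lem:1} with the constant (bounded) sequence $u_N = \varphi_k$ and with $v_N = \pi_l(\cdot;\mathbf{r})$, whose uniform limit is $\pi_l(\cdot;\omega)$; this yields the coefficient convergence $[\varphi_k, \pi_l(\cdot;\mathbf{r})]_N \to \scp{\varphi_k}{\pi_l(\cdot;\omega)}$. Because each of the finitely many coefficients converges as a scalar and each $\pi_l(\cdot;\mathbf{r})$ converges uniformly (and all functions involved are bounded), the unnormalized function $\tilde{\pi}_k(\cdot;\mathbf{r})$ from \cref{eq:Gram-Schmidt} converges uniformly to $\tilde{\pi}_k(\cdot;\omega)$. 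Applying \cref{lem:1} once more to the now uniformly convergent sequence $\tilde{\pi}_k(\cdot;\mathbf{r})$ in both slots gives $\norm{\tilde{\pi}_k(\cdot;\mathbf{r})}_N \to \norm{\tilde{\pi}_k(\cdot;\omega)}$, and dividing the uniformly convergent numerators by these convergent normalizers closes the induction.

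The step requiring the most care is this final normalization: dividing by $\norm{\tilde{\pi}_k(\cdot;\mathbf{r})}_N$ is only harmless if this quantity is eventually bounded away from zero. This is guaranteed because $\{\varphi_k\}_{k=1}^K$ is a basis, so the Gram--Schmidt remainder $\tilde{\pi}_k(\cdot;\omega)$ is nonzero with $\norm{\tilde{\pi}_k(\cdot;\omega)} > 0$; the scalar convergence just established then forces $\norm{\tilde{\pi}_k(\cdot;\mathbf{r})}_N$ to stay positive for all large $N$. With that in hand, the elementary fact that $g_N/c_N \to g/c$ uniformly whenever $g_N \to g$ uniformly and the scalars $c_N \to c \neq 0$ completes the argument. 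Throughout, the uniform boundedness of the $\varphi_k$ (and hence of all intermediate functions) is what legitimizes treating products of convergent scalars with uniformly convergent functions, so the only genuine subtlety is keeping the denominators away from zero.
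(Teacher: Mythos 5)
Your proof is correct and follows essentially the same route as the paper's: induction on $k$, using \cref{lem:1} to pass the Gram--Schmidt coefficients and normalizing constants to their continuous limits. The only difference is that you explicitly justify why the denominators $\norm{\tilde{\pi}_k(\cdot;\mathbf{r})}_N$ stay bounded away from zero (via positive definiteness of the continuous inner product and the basis property), a detail the paper's proof leaves implicit.
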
 

\begin{proof}
  	The assertion is proven by induction. 
	For $k=1$, recall that $\pi_1 = \varphi_1/\|\varphi_1\|$ and $\pi_1^{(N)} = \varphi_1/\|\varphi_1\|_N$.
	Hence, the assertion follows from \cref{eq:cond1} implying that ${\|\varphi_1\|_N \to \|\varphi_1\|}$ for ${N \to \infty}$. 
	Next, it is argued that if the assertion holds for the first $k-1$ orthonormal basis functions, then it also holds for the $k$-th one. 
  	To this end, let $l \in \{1,2,\dots,k-1\}$ and assume that 
  	\begin{equation}
    		\pi_l^{(N)} \to \pi_l \ \text{ in } \ ( \mathcal{F}_K(\Omega), \|\cdot\|_{L^\infty(\Omega)} ), \quad N \to \infty.
  	\end{equation}
  	Recall that by Gram--Schmidt orthonormalization, the $k$-th orthonormal basis functions are given by \cref{eq:Gram-Schmidt}. 
  	Hence, \cref{lem:1} implies   
  	\begin{equation}
  		[\varphi_k,\pi_l^{(N)}]_N \to \scp{\varphi_k}{\pi_l}, \quad N \to \infty, 
  	\end{equation}
  	and therefore 
  	\begin{equation}
    		\tilde{\pi}_k^{(N)} \to \tilde{\pi}_k \ \text{ in } \ ( \mathcal{F}_K(\Omega), \|\cdot\|_{L^\infty(\Omega)} ), 
    \quad N \to \infty.
  	\end{equation} 
  	Here, $\tilde{\pi}_k^{(N)}$ and $\tilde{\pi}_k$ respectively denote the unnormalized basis function.  
  	\cref{lem:1} yields 
  	\begin{equation} 
		\| \tilde{\pi}_k^{(N)} \|_N \to \| \tilde{\pi}_k \|, \quad N \to \infty. 
	\end{equation} 
  	This implies 
  	\begin{equation}
    		\pi_k^{(N)} \to \pi_k \ \text{ in } \ ( \mathcal{F}_K(\Omega), \|\cdot\|_{L^\infty(\Omega)} ), \quad N \to \infty,
  	\end{equation}
  	which completes the proof. 
\end{proof}

\cref{lem:1} and \cref{lem:2} now enable us to prove the following theorem. 

\begin{theorem}[The LS-CF is Conditionally Positive]\label{thm:main}
	Given is a bounded domain $\Omega \subset \R^d$, $\omega: \Omega \to \R_0^+$, and $\mathcal{F}_K(\Omega) \subset C(\Omega)$ such that the restrictions \ref{item:restr_weight} and \ref{item:restr_space} are satisfied, i.\,e., 
	\begin{enumerate}
		\item[(R2)]
		The weight function $\omega: \Omega \to \R_0^+$ is Riemann integrable and positive almost everywhere. 

		\item[(R3)] 
		The $K$-dimensional vector space $\mathcal{F}_K(\Omega)$ is spanned by a basis $\{ \varphi_k \}_{k=1}^K$ of continuous and bounded functions $\varphi_k:	\Omega \to \R$, $k=1,\dots,K$. 
		Furthermore, $\mathcal{F}_K(\Omega)$ contains constants. In particular, $1 \in \mathcal{F}_K(\Omega)$. 
	
	\end{enumerate}
	Moreover, let $(\mathbf{x}_n)_{n \in \N} \subset \Omega$ be $\mathcal{F}_K(\Omega)$-unisolvent and $(r_n)_{n \in \N} \subset \R^+$ such that  
 	\begin{equation}\label{eq:cond-thm}
		\lim_{N \to \infty} \sum_{n=1}^N r_n u(\mathbf{x}_n) v(\mathbf{x}_n) 
			= \int_\Omega \omega(\boldsymbol{x}) u(\boldsymbol{x}) v(\boldsymbol{x}) \intd \boldsymbol{x} 
			\quad \forall u, v \in \mathcal{F}_K(\Omega). 
	\end{equation} 
	Then, there exists an $N_0 \in \N$ such that for all $N \geq N_0$ the corresponding LS-CF 
	\begin{equation}
		C^{\mathrm{LS}}_N[f] = \sum_{n=1}^N w_n^{\mathrm{LS}} f(\mathbf{x}_n) 
		\quad \text{with} \quad 
		\mathbf{w}^{\mathrm{LS}} = \argmin_{\Phi \mathbf{w} = \mathbf{m}} \ \norm{ R^{-1/2} \mathbf{w} }_{2}, 
	\end{equation} 
	where $R^{-1/2} = \diag\left( 1/\sqrt{r_1}, \dots, 1/\sqrt{r_N} \right)$, is positive and $\mathcal{F}_K(\Omega)$-exact. 
\end{theorem} 

\begin{proof} 
	First, we note that $(\mathbf{x}_n)_{n \in \N}$ being $\mathcal{F}_K(\Omega)$-unisolvent ensures the existence of a discrete inner product and a discrete orthonormal basis. 
	Let us denote such a basis by $\{ \pi_k^{(N)} \}_{k=1}^K$ and the corresponding continuous orthonormal basis by $\{ \pi_k \}_{k=1}^K$. 
	It can be assumed that both bases are constructed by applying Gram--Schmidt orthonormalization to the same initial basis $\{ \varphi_k \}_{k=1}^K$. 
	Hence, the LS weights are explicitly given by  
	\begin{equation}
		w_n^{\mathrm{LS}} = r_n \sum_{k=1}^K \pi_k^{(N)}( \mathbf{x}_n) I[ \pi_k^{(N)} ], \quad n=1,\dots,N, 
	\end{equation} 
	for $N \geq K$, where $I[ \pi_k^{(N)} ] = \int_{\Omega} \pi_k^{(N)}(\boldsymbol{x}) \intd \boldsymbol{x}$. 
	Next, let 
	\begin{equation} 
		\epsilon_k^{(N)} := [\pi_k^{(N)},1]_N - \scp{\pi_k^{(N)}}{1}. 
	\end{equation} 
	This allows us to rewrite the LS weights as follows: 
	\begin{equation} 
	\begin{aligned}
		w_n^{\mathrm{LS}} 
			& = r_n \sum_{k=1}^K \pi_k^{(N)}( \mathbf{x}_n) \scp{ \pi_k^{(N)} }{ 1 } \\ 
			& = r_n \sum_{k=1}^K \pi_k^{(N)}( \mathbf{x}_n) \left( [\pi_k^{(N)},1]_N - [\pi_k^{(N)},1]_N + \scp{ \pi_k^{(N)} }{ 1 } \right) \\ 
			& = r_n \sum_{k=1}^K \pi_k^{(N)}( \mathbf{x}_n) \left( [\pi_k^{(N)},1]_N - \epsilon_k^{(N)} \right)
	\end{aligned}
	\end{equation} 
	for $n=1,\dots,N$. 
	Because of \ref{item:restr_space}, we can assume that $\varphi_1 \equiv 1$ and consequently $\pi_1^{(N)} = 1/\|1\|_N$. 
	This implies 
	\begin{equation} 
		[\pi_k^{(N)},1]_N 
			= \|1\|_N [\pi_k^{(N)},\pi_1^{(N)}]_N 
			= \|1\|_N \delta_{1,k}, 
	\end{equation} 
	since $\{\pi_k^{(N)}\}_{k=1}^K$ is assumed to be orthonormal with respect to the discrete inner product $[\cdot,\cdot]_N$. 
	Thus, the LS weights can further be rewritten as 
	\begin{equation} 
	\begin{aligned}
		w_n^{\mathrm{LS}} 
			& = r_n \sum_{k=1}^K \pi_k^{(N)}( \mathbf{x}_n) \left( \|1\|_N \delta_{1,k} - \epsilon_k^{(N)} \right) \\ 
			& = r_n \left( \|1\|_N \pi_1^{(N)}( \mathbf{x}_n) - \sum_{k=1}^K \epsilon_k^{(N)} \pi_k^{(N)}( \mathbf{x}_n) \right) \\ 
			& = r_n \left( 1 - \sum_{k=1}^K \epsilon_k^{(N)} \pi_k^{(N)}( \mathbf{x}_n) \right).
	\end{aligned}
	\end{equation} 
	Hence, the assertion ($w_n^{\text{LS}} > 0$ for all $n=1,\dots,N$) is equivalent to 
	\begin{equation}\label{eq:assertion2-thm}
		\sum_{k=1}^K \varepsilon_k^{(N)} \pi_k^{(N)}(\mathbf{x}_n) < 1, \quad n=1,\dots,N.
  	\end{equation} 
	At the same time, \cref{eq:cond-thm} and \cref{lem:2} imply that every element of the discrete orthonormal basis converges uniformly to the corresponding element of the continuous orthonormal basis. 
	In particular, for every $k=1,\dots,K$, the function sequence ${(\pi_k^{(N)})_{N \in \N} \subset \mathcal{F}_K(\Omega)}$ is uniformly bounded.\footnote{If $(y_n)_{n \in \N}$ is a convergent sequence in the normed vector space $(Y,\| \cdot \|)$, then $(y_n)_{n \in \N}$ is bounded. This is because for any $\varepsilon > 0$ we can find an $N \in \N$ such that $\| y - y_n \| \leq \varepsilon$ for all $n > N$, where $y$ denotes the limit of $(y_n)_{n \in \N}$. One can then choose $C = \| y \| + \max\{ \| y - y_1 \|, \dots, \| y - y_N \|, \varepsilon \}$ to get $\|y_n\| \leq \|y\| + \| y - y_n \| \leq C$ for all $n$, which shows that $(y_n)_{n \in \N}$ is bounded.}
	Thus, there exists a constant $C > 0$ such that 
	\begin{equation}
    		\sum_{k=1}^K \varepsilon_k^{(N)} \pi_k^{(N)}(\mathbf{x}_n) 
      		\leq C \sum_{k=1}^K \left| \varepsilon_k^{(N)} \right|, \quad n=1,\dots,N.
  	\end{equation} 
	Moreover, \cref{lem:1} implies $\lim_{N \to \infty} \epsilon_k^{(N)} = 0$ for all  $k=1,\dots,K$. 
	Hence, there exists an $N_0 \geq K$ such that 
	\begin{equation}
    		\left| \varepsilon_k^{(N)} \right| < \frac{1}{K C}, \quad k=1,\dots,K,
  	\end{equation}
  	for all $N \geq N_0$. 
  	Finally, this yields \cref{eq:assertion2-thm} and therefore the assertion. 
\end{proof}

A simple consequence of \cref{thm:main} is the subsequent corollary in which the special case of equidistributed data points is considered. 

\begin{corollary}\label{cor:main}
	Given are $\Omega \subset \R^d$, $\omega: \Omega \to \R_0^+$, and $\mathcal{F}_K(\Omega) \subset C(\Omega)$ such that the restrictions \ref{item:restr_weight} and \ref{item:restr_space} are satisfied, i.\,e., 
	\begin{enumerate}
		\item[(R2)]
		The weight function $\omega: \Omega \to \R_0^+$ is Riemann integrable and positive almost everywhere. 

		\item[(R3)] 
		The $K$-dimensional vector space $\mathcal{F}_K(\Omega)$ is spanned by a basis $\{ \varphi_k \}_{k=1}^K$ of continuous and bounded functions $\varphi_k:	\Omega \to \R$, $k=1,\dots,K$. 
		Furthermore, $\mathcal{F}_K(\Omega)$ contains constants. In particular, $1 \in \mathcal{F}_K(\Omega)$. 
	
	\end{enumerate} 
	Let $(\mathbf{x}_n)_{n \in \N} \subset \Omega$ be a equidistributed sequence with $\omega(\mathbf{x}_n) > 0$ for all $n \in \N$. 
	Then, there exists an $N_0 \in \N$ such that for all $N \geq N_0$ and discrete weights 
	\begin{equation}\label{eq:discr_weights}
		r_n = \frac{|\Omega| \omega(\mathbf{x}_n)}{N}, \quad n=1,\dots,N,
	\end{equation} 
	the corresponding LS-CF 
	\begin{equation}
		C^{\mathrm{LS}}_N[f] = \sum_{n=1}^N w_n^{\mathrm{LS}} f(\mathbf{x}_n) 
		\quad \text{with} \quad 
		\mathbf{w}^{\mathrm{LS}} = \argmin_{\Phi \mathbf{w} = \mathbf{m}} \ \norm{ R^{-1/2} \mathbf{w} }_{2}, 
	\end{equation} 
	where $R^{-1/2} = \diag\left( 1/\sqrt{r_1}, \dots, 1/\sqrt{r_N} \right)$, is positive and $\mathcal{F}_K(\Omega)$-exact. 
\end{corollary} 

\begin{proof} 
	Recall that the equidistributed sequence $(\mathbf{x}_n)_{n \in \N} \subset \Omega$ satisfies \cref{eq:equi} for all measurable bounded functions that are continuous almost everywhere and, by \cref{cor:equid}, is $\mathcal{F}_K(\Omega)$-unisolvent.
	In particular, \cref{eq:cond-thm} holds for $(\mathbf{x}_n)_{n \in \N}$ and the discrete weights $(r_n)_{n \in \N}$ defined as in \cref{eq:discr_weights}. 
	In combination with \ref{item:restr_weight} and \ref{item:restr_space}, \cref{thm:main} therefore implies the assertion. 
\end{proof} 

\begin{remark}
	\ref{item:restr_domain} is not necessary for \cref{cor:main} to hold. 
	However, following \cref{rem:eq-points}, \ref{item:restr_domain} ensures the existence---and simple construction---of an equidistributed sequence in $\Omega$. \end{remark} 
	
\begin{remark}
	\cref{cor:main} implies that if $(\mathbf{x}_n)_{n \in \N} \subset \Omega$ is equidistributed, then for sufficiently large $N$, there exists a positive and exact CF on the set $\{\mathbf{x}_n\}_{n=1}^N$. 
	As we will discuss in \cref{sub:interpol_CFs}, thus there also exists a positive interpolatory CF---predicted by \cref{thm:Tchakaloff}---on $\{\mathbf{x}_n\}_{n=1}^N$. 
	Such sets, on which a positive and interpolatory CF is supported, are referred to as \emph{Tchakaloff sets}. 
	Hence, \cref{cor:main} also implies that if $(\mathbf{x}_n)_{n \in \N} \subset \Omega$ is equidistributed, then for sufficiently large $N$, $\{\mathbf{x}_n\}_{n=1}^N$ is a Tchakaloff set. 
	Similar results were obtained in \cite{davis1967construction} and \cite{wilhelmsen1976nearest} for everywhere dense sequences. 
	However, \cref{cor:main} does not just provide us with a Tchakaloff set, but also tells us that a positive and exact CF can be obtained in the form of a simle weighted LS solution of the linear system \cref{eq:ex-system}. 
\end{remark}  
\section{Convergence and Error Analysis} 
\label{sec:error}

Here, we address the convergence of positive and exact LS-CFs.

\subsection{The Lebesgue Inequality and Convergence} 
\label{sub:error_Leb}

Assume that the positive LS-CF ${C_N}$ is exact for all functions from the function space $\mathcal{F}_K(\Omega)$.
Moreover, let $f:\Omega \to \R$ be a continuous function, and let us denote a best approximation of $f$ from $\mathcal{F}_K(\Omega)$ with respect to the $L^\infty(\Omega)$-norm by $\hat{s}$.
That is, 
\begin{equation}\label{eq:Lebesgue}
	\hat{s} = \argmin_{s \in \mathcal{F}_K(\Omega)} \norm{ f - s }_{L^{\infty}(\Omega)} 
	\quad \text{with} \quad 
	\norm{ f - s }_{L^{\infty}(\Omega)} = \sup_{\boldsymbol{x} \in \Omega} | f(\boldsymbol{x}) - s(\boldsymbol{x}) |. 
\end{equation} 
Then, the following error bound holds: 
\begin{equation}\label{eq:L-inequality} 
\begin{aligned}
	| C_N[f] - I[f] | 
		& \leq \| I \|_{\infty} \| f - \hat{s} \|_{L^{\infty}(\Omega)} 
			+ \| C_N \|_{\infty} \| f - \hat{s} \|_{L^{\infty}(\Omega)} \\ 
		& = \left( \| I \|_{\infty} + \| C_N \|_{\infty} \right) \left( \inf_{ s \in \mathcal{F}_K(\Omega) } \norm{ f - s }_{L^{\infty}(\Omega)} \right)
\end{aligned}
\end{equation} 
Inequality \cref{eq:L-inequality} is commonly known as the Lebesgue inequality; see, e.\,g., \cite{van2020adaptive} or \cite[Theorem 3.1.1]{brass2011quadrature}. 
It is most often encountered in the context of polynomial interpolation \cite{brutman1996lebesgue,ibrahimoglu2016lebesgue} but straightforwardly carries over to numerical integration.
In this context, the operator norms $\|I\|_\infty$ and $\|C_N\|_{\infty}$ are respectively given by 
\begin{eq}
	\|I\|_\infty 
		= \int_\Omega \omega(\boldsymbol{x}) \intd \boldsymbol{x} 
		= I[1], \quad 
	\|C_N\|_{\infty} 
		= \sum_{n=1}^N |w_n|. 
\end{eq}
Recall that the CF $C_N$ is positive and exact for constants (we assume that $\mathcal{F}_K(\Omega)$ contains constants). 
Thus, we have 
\begin{eq} 
	\|C_N\|_{\infty} = C_N[1] = I[1] = \|I\|_\infty. 
\end{eq}
In particular, this implies that the Lebesgue inequality \cref{eq:L-inequality} simplifies to 
\begin{eq}\label{eq:L-inequality2} 
	| C_N[f] - I[f] | 
		\leq 2 \| I \|_{\infty} \left( \inf_{ s \in \mathcal{F}_K(\Omega) } \norm{ f - s }_{L^{\infty}(\Omega)} \right).
\end{eq}
Based on \cref{eq:L-inequality}, we can note the following: 
Assume that we are given a sequence of positive CFs $(C_N)_{N \in \N}$ with $C_N$ being exact for $\mathcal{F}_K(\Omega)$, where $K = K(N)$. 
Sequences of CFs are usually referred to as \emph{cubature rules (CRs)}. 
Let $\mathcal{F}_K(\Omega) \subset \mathcal{F}_{K+1}(\Omega)$ for all $K \in \N$
and let $\bigcap_{K \in \N} \mathcal{F}_K(\Omega)$ be dense in $C(\Omega)$ with respect to the $L^{\infty}(\Omega)$-norm. 
If $K(N) \to \infty$ for $N \to \infty$, then $(C_N)_{N \in \N}$ converges to the continuous integral, $I$, for all continuous functions. 
That is, for all $f \in C(\Omega)$, 
\begin{eq} 
	C_N[f] \to I[f], \quad N \to \infty, \quad \text{in } (\R,|\cdot|). 
\end{eq} 
It should be stressed that the particular rate of convergence of $(C_N[f])_{N \in \N}$ to $I[f]$ depends on the (smoothness) of the function $f$ as well as the finite-dimensional function spaces $\mathcal{F}_K(\Omega)$. 
In particular, a more detailed error analysis based on \cref{eq:L-inequality2} relies on some knowledge about the quality of the $L^{\infty}(\Omega)$ best approximation from $\mathcal{F}_K(\Omega)$. 
Results of this flavor are usually referred to as Jackson-type theorems; the subject of constructive function theory \cite{natanson1961constructive}.

\subsection{Error Analysis for Analytic Functions} 
\label{sub:error_analytic}

For simplicity, we now restrict ourselves to analytic functions on the $d$-dimensional hypercube $\Omega = [0,1]^d$. 
Moreover, we assume that $(C_N)_{N \in \N}$ is a CR with $C_N$ being positive and exact for all $d$-dimensional polynomials up to total degree $m = m(N)$.\footnote{
The relation between the number of data points $N$ and the maximum total degree $m$ remains to be addressed. 
}
That is, $\mathcal{F}_K(\Omega) = \mathbb{P}_m(\R^d)$. 
In this case, the following result holds for the LS-CFs. 

\begin{lemma}\label{lem:conv}
	Let $f:[0,1]^d \to \R$ be analytic in an open set containing $[0,1]^d$. 
	Then the CR $(C_N)_{N \in \N}$, where $C_N$ is positive and $\mathbb{P}_m(\R^d)$-exact, with $m = m(N)$, satisfies 
	\begin{eq} 
		|I[f] - C_N[f]| = \mathcal{O} \left( \exp( -c m / \sqrt{d} ) \right)
	\end{eq} 
	for some constant $c > 0$.
\end{lemma}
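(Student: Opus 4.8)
The plan is to combine the Lebesgue inequality \cref{eq:L-inequality2}, which is already established, with a Jackson--Bernstein-type estimate for the best polynomial approximation of an analytic function. Since $C_N$ is positive and $\mathbb{P}_m(\R^d)$-exact, \cref{eq:L-inequality2} immediately gives
\[
  |I[f] - C_N[f]| \leq 2 \|I\|_\infty \, E_m(f), \qquad
  E_m(f) := \inf_{s \in \mathbb{P}_m(\R^d)} \|f - s\|_{L^\infty(\Omega)},
\]
with $\Omega = [0,1]^d$. The factor $2\|I\|_\infty = 2 I[1] = 2\int_\Omega \omega$ is a finite constant independent of $f$ and $m$, so the whole statement reduces to showing that the best total-degree-$m$ approximation error of an analytic $f$ decays like $\exp(-cm/\sqrt d)$. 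Recording this reduction explicitly is the first step.

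Second, I would bound $E_m(f)$ via the tensor-product Chebyshev expansion $f = \sum_\alpha a_\alpha T_\alpha$, after the affine change of variables mapping $[0,1]^d$ to $[-1,1]^d$, which leaves the statement unchanged. Since $[0,1]^d$ is compact, the given open set of analyticity contains a uniform neighborhood of the cube, so $f$ continues to an isotropic complex neighborhood and hence to a Bernstein-type region. Because that region is isotropic, the Chebyshev coefficients decay geometrically with respect to the \emph{Euclidean} length of the multi-index, i.e.\ $|a_\alpha| \leq C \rho^{-\|\alpha\|_2}$ for some $\rho > 1$. Taking as approximant the partial sum over the total-degree simplex $\{\alpha : |\alpha|_1 \leq m\}$, whose span lies in $\mathbb{P}_m(\R^d)$, bounds the error by the tail $\sum_{|\alpha|_1 > m} |a_\alpha|$.

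Third, I would estimate this tail and extract the $\sqrt d$. Among all multi-indices with $|\alpha|_1 > m$, the smallest Euclidean norm is attained near the diagonal $\alpha \approx (m/d,\dots,m/d)$, where $\|\alpha\|_2 \gtrsim m/\sqrt d$; equivalently, $\mathbb{P}_m(\R^d)$ contains every polynomial of Euclidean degree at most $m/\sqrt d$, since $|\alpha|_1 \leq \sqrt d \, \|\alpha\|_2$. Thus each surviving coefficient is bounded by $C\rho^{-m/\sqrt d}$, and summing over the polynomially-many multi-indices at each total degree (a factor absorbed by shrinking the exponent infinitesimally) yields $E_m(f) \leq C' \exp(-c m/\sqrt d)$ for any $c < \log\rho$. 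Combined with the first step, this proves the claim.

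I expect the genuine obstacle to be the second and third steps: establishing the geometric, Euclidean-norm-governed decay of the multivariate Chebyshev coefficients (the ``isotropy'' of analytic approximation in the hypercube) and correctly identifying the diagonal as the binding direction for the total-degree tail. This is exactly where the factor $\sqrt d$ originates. Rather than reproving this multivariate approximation theory from scratch, I would invoke the hypercube approximation results of \cite{trefethen2017multivariate,trefethen2017cubature,trefethen2021exactness}; by contrast, the Lebesgue-inequality reduction in the first step is immediate from what has already been proved.
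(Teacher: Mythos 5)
Your proposal is correct and follows essentially the same route as the paper: reduce via the Lebesgue inequality \cref{eq:L-inequality2} to the best-approximation error $\inf_{s \in \mathbb{P}_m(\R^d)} \|f-s\|_{L^\infty([0,1]^d)}$, and then invoke the $\mathcal{O}(\exp(-cm/\sqrt{d}))$ bound for analytic functions on the hypercube from \cite{trefethen2017multivariate,bos2018bernstein}. The only difference is that you additionally sketch (correctly) the Euclidean-degree/Bernstein-ellipse mechanism behind that cited bound, which the paper simply takes from the references.
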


\begin{proof}
	Since $f$ is analytic in an open set containing $[0,1]^d$ it can be approximated by a $d$-dimensional polynomial of total degree $m$ as 
	\begin{eq}\label{eq:conv_proof} 
		\inf_{s \in \mathbb{P}_m(\R^d)} \| f - s \|_{L^{\infty}([0,1]^d)} 
			= \mathcal{O} \left( \exp( -c m / \sqrt{d} ) \right);
	\end{eq} 
	see \cite[Equation 5.8]{nakatsukasa2018approximate}. 
	Here, $c$ is a constant depending on the location of the singularity of $f$ (if there is any) nearest to $[0,1]^d$ with respect to the radius of the Bernstein ellipse. 
	Finally, combining \cref{eq:conv_proof} with the Lebesgue inequality \cref{eq:L-inequality2} immediately yields the assertion.
\end{proof}

Let us briefly address the relation between the number of data points $N$ and the maximum total degree $m$ for which the LS-CF is positive.  
First, it should be noted that the dimension of $\mathbb{P}_m(\R^d)$ is $K = \binom{d+m}{d}$. 
This implies the asymptotic relation ${\lim_{m \to \infty} K/m^d = 1/d!}$; in particular, $K = \mathcal{O}(m^d)$. 
Furthermore, in \cref{sub:num_ratio}, we observe the relation between $K$ and $N$ to be of the form $N = \mathcal{O}(K^2)$. 
Assuming this relation, we get the following version of \cref{lem:conv}. 

\begin{corollary} 
	Let $f:[0,1]^d \to \R$ be analytic in an open set containing $[0,1]^d$ and let $(C_N)_{N \in \N}$ be a CR with $C_N$ being positive and $\mathbb{P}_m(\R^d)$-exact. 
	Assume that we have the asymptotic relation $N = \mathcal{O}(K^2)$ with $K = \dim \mathbb{P}_m(\R^d)$, then 
	\begin{eq}\label{eq:conv_rate} 
		|I[f] - C_N[f]| = \mathcal{O} \left( \exp( -c N^{1/2d} / \sqrt{d} ) \right)
	\end{eq} 
	for some constant $c > 0$.
\end{corollary}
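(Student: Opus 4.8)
The plan is to treat this as an immediate corollary of \cref{lem:conv}, whose error bound $\mathcal{O}(\exp(-cm/\sqrt{d}))$ I only need to re-express in terms of $N$ rather than the total degree $m$. The entire argument is a change of variables driven by the asymptotic dictionary between $m$, $K$, and $N$ that was assembled in the paragraph preceding the statement; no new analytic input is required, and in particular the analyticity of $f$ enters only through \cref{lem:conv}.

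First I would recall the two relevant asymptotics. The dimension of the polynomial space is $K = \binom{d+m}{d}$, and from $\lim_{m \to \infty} K/m^d = 1/d!$ it follows that $K$ grows exactly like $m^d$, so that $K^2$ grows like $m^{2d}$. The hypothesis $N = \mathcal{O}(K^2)$ then supplies constants $C, C' > 0$ with $N \leq C\,K^2 \leq C'\,m^{2d}$ for all sufficiently large $m$. The key step is to invert this into a lower bound on the degree: because $N$ is bounded above by a power of $m$, the degree $m$ is bounded below by the corresponding root of $N$, i.e.\ there exists $c_1 > 0$ with $m \geq c_1 N^{1/2d}$ for all sufficiently large $N$. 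This is precisely the direction we want, since a large $m$ relative to $N$ is what forces the exponential error bound to decay rapidly in $N$.

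Finally I would substitute $m \geq c_1 N^{1/2d}$ into the estimate of \cref{lem:conv}, giving
\[
	|I[f] - C_N[f]| = \mathcal{O}\!\left( \exp(-cm/\sqrt{d}) \right)
	\leq \mathcal{O}\!\left( \exp(-c\,c_1\, N^{1/2d}/\sqrt{d}) \right),
\]
and renaming the product $c\,c_1$ as $c$ reproduces \cref{eq:conv_rate}. I do not anticipate any genuine obstacle: the proof is a one-line composition of asymptotic relations, and the only point demanding care is the bookkeeping of the $\mathcal{O}$ directions, ensuring that the upper bound on $N$ in the hypothesis is correctly converted into a lower bound on $m$ before it is fed into the exponential.
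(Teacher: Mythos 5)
Your proposal is correct and follows essentially the same route as the paper: both start from the bound of \cref{lem:conv} and convert the exponent from $m$ to $N^{1/2d}$ by composing the relations $K = \mathcal{O}(m^d)$ and $N = \mathcal{O}(K^2)$, the only cosmetic difference being that the paper chains the inequalities through the intermediate quantity $K^{1/d}$ while you first combine the two asymptotics into $N \leq C' m^{2d}$ and invert once. Your explicit care with the direction of the inversion (the upper bound on $N$ becoming a lower bound on $m$) is precisely the point on which the paper's proof implicitly relies.
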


\begin{proof} 
	Recall that $g(N) \in \mathcal{O}(h(N))$ if and only if there exists a constant $M>0$ such that $g(N) \leq M h(N)$ for sufficiently large $N$. 
	Henceforth, let $c>0$ be a generic constant.
	The assertion follows from noting that there exists a constant $M > 0$ such that 
	\begin{eq} 
	\begin{aligned}
		|I[f] - C_N[f]| 
			& \leq M \exp( -c m / \sqrt{d} ) \\
			& \leq M \exp( -c K^{1/d} / \sqrt{d} ) \\
			& \leq M \exp( -c N^{1/2d} / \sqrt{d} ) 
	\end{aligned}
	\end{eq} 
	for sufficiently large $N$. 
	Here, the first inequality follows from \cref{lem:conv}, the second one from the fact that $K = \mathcal{O}(m^d)$, and the third one from the assumption that $N = \mathcal{O}(K^2)$.  
\end{proof}

We already point out that for the positive interpolatory CFs discussed in \cref{sub:interpol_CFs}, we have $N = K$ (rather than just $N = \mathcal{O}(K^2)$) and therefore 
\begin{eq}\label{eq:conv_rate2} 
	|I[f] - C_N[f]| = \mathcal{O} \left( \exp( -c N^{1/d} / \sqrt{d} ) \right)
\end{eq} 
instead of \cref{eq:conv_rate}. 

\begin{remark}
	The above error analysis can easily be extended whenever a relation analogue to \cref{eq:conv_proof} is available. 
	See \cite{schultz1969multivariate,bagby2002multivariate} for a discussion of some other classes of functions $f$, domains $\Omega$, and function spaces $\mathcal{F}_K(\Omega)$.
\end{remark}

\begin{remark} 
	The 'impossibility' theorem proved in \cite{platte2011impossibility} states that any procedure for approximating univariate functions from equally spaced samples that converges exponentially fast must also be exponentially ill-conditioned.  
	Observe that for $d=1$, \cref{eq:conv_rate2} implies root-exponential convergence for the LS-CF, which allows us to avoid such inherent stability issues. 
\end{remark}

\begin{remark} 
	It was argued in some recent works \cite{trefethen2017cubature,trefethen2017multivariate,trefethen2021exactness}---also see \cite{bos2018bernstein}---that for a certain class of functions (analytic in the hypercube with singularities outside), the Euclidean degree should be considered instead of the total or maximum degree. 
	However, we did not observe any advantage in using the Euclidean degree in our numerical tests. 
\end{remark} 
\section{Some Applications}
\label{sec:applications}

We discuss two applications of the provable positive and exact LS-CFs. 
These address the simple construction of positive high-order CRs (\cref{sub:const_procedure}) and positive interpolatory CFs (\cref{sub:interpol_CFs}).  
In both cases, the procedure again only relies on basic linear algebra operations.

\subsection{A Simple Procedure To Construct Positive High-Order Cubature Rules}
\label{sub:const_procedure}

Henceforth, we make the same assumptions as in \cref{cor:main}. 
That is, $\Omega \subset \R^d$ is bounded with a boundary of measure zero and $\omega: \Omega \to \R_0^+$ is Riemann integrable and positive almost everywhere. 
Moreover, let $(\mathbf{x}_n)_{n \in \N}$ be an equidistributed sequence in $\Omega$ with $\omega(\mathbf{x}_n) > 0$ for all $n \in \N$. 
Given is a sequence of increasing function spaces $( \mathcal{F}_K(\Omega) )_{K \in \N}$ with $1 \in \mathcal{F}_K(\Omega) \subset C(\Omega)$, $\mathcal{F}_K(\Omega) \subset \mathcal{F}_{K+1}(\Omega)$, and $\dim \mathcal{F}_K(\Omega) = K$ for all $K \in \N$. 
For simplicity, we shall assume that $\bigcap_{K \in \N} \mathcal{F}_K(\Omega)$ is dense in $C(\Omega)$ with respect to the $L^{\infty}(\Omega)$-norm. 
Following the discussion in \cref{sub:error_Leb}, this will ensure convergence of the subsequent CR for all continuous functions. 

Under these assumptions, the procedure works as follows: 
In every step, we increase the dimension $K$and find a positive and $\mathcal{F}_K(\Omega)$-exact CF by increasing the number of data points until the corresponding LS-CF $C_N$ is positive. 
\cref{algo:LS-CF} contains an informal summary of the procedure for fixed $K$. 

\begin{algorithm}
\caption{Constructing Positive High-Order Cubature Formulas}
\label{algo:LS-CF}
\begin{algorithmic}[1]
    \State{$N = K$, $r = 0$, and $w_{\text{min}} = 0$} 
    \While{$r < K$ or $w_{\text{min}} < 0$} 
      	\State{$X_N = \{\mathbf{x}_n\}_{n=1}^N$} 
      	\State{Compute the matrix $\Phi = \Phi(X_N)$}
      	\State{Compute the rank of $\Phi$: $r = \text{rank}(\Phi)$} 
      	\If{$r = K$}
      		\State{Compute the LS weights $\mathbf{w}^{\text{LS}}$ as in \cref{eq:LS-sol}} 
      		\State{Determine the smallest weight: $w_{\text{min}} = \min( \mathbf{w}^{\text{LS}} )$}
		\EndIf 
	\State{$N = 2N$} 
    \EndWhile 
\end{algorithmic}
\end{algorithm} 

\cref{algo:LS-CF} is ensured to terminate due to the theoretical findings presented in \cref{sec:prelim} and \cref{sec:LS}. 
In particular, \cref{cor:equid} tells us that for a sufficiently large number of (equidistributed) nodes, these will be $\mathcal{F}_K(\Omega)$-unisolvent. 
This is equivalent to the rows of $\Phi$ to be linearly independent ($\rank \Phi = K$). 
Hence, $r = K$ is ensured for sufficiently large $N$. 
At the same time, \cref{cor:main} implies that the LS weights $\mathbf{w}^{\text{LS}}$ are positive for a sufficiently large $N$. 

\begin{remark}[Monte Carlo CFs]\label{rem:MC_correction}
	The LS-CFs discussed above can be seen as high-order corrections to QMC methods \cite{caflisch1998monte,dick2013high} in the case that low-discrepancy data points are used.
	Recall that the weights in QMC integration are $w_n = |\Omega| \omega(\mathbf{x}_n) / N$. 
	At the same time, the LS weights are explicitly given by 
	$w_n^{\mathrm{LS}} = r_n \sum_{k=1}^K \pi_k( \mathbf{x}_n ; \mathbf{r}) I[ \pi_k( \, \cdot \, ; \mathbf{r} ) ]$, see \cref{eq:LS-sol-explicit}. 
	Here, $\{ \pi_k( \, \cdot \, ; \mathbf{r}) \}_{k=1}^K$ is a discrete orthonormal basis. 
	For fixed $K$ and an increasing number of data points, $\pi_k( \mathbf{x}_n ; \mathbf{r}) I[ \pi_k( \, \cdot \, ; \mathbf{r} ) ]$ converges to the Kronecker delta $\delta_{1,k}$. 
	Hence, the difference between the QMC and LS weights converges to zero. 
\end{remark}

\begin{remark}[Exact Integration of Discrete LS Approximations]\label{rem:DLS_approx}
	The LS-CF $C_N[f]$ corresponds to exact integration of the following discrete LS approximation of $f$ from the function space $\mathcal{F}_K(\Omega) = \mathrm{span} \{\varphi_1,\dots,\varphi_K\}$ (assuming it is unique): 
	\begin{equation} 
		\hat{f}(\boldsymbol{x}) = \sum_{k=1}^K c_k \varphi_k(\boldsymbol{x}) 
		\quad \text{s.t.} \quad 
		\norm{ R^{1/2} ( \Phi^T \mathbf{c} - \mathbf{f} ) }_2 
		\ \text{is minimized}, 
	\end{equation} 
	where $\mathbf{c} = (c_1,\dots,c_K)^T$.
	That is, $C_N[f] = I[\hat{f}]$. 
	This can be noted by representing $\hat{f}$ with respect to to a basis of discrete orthonormal functions $\{ \pi_k(\cdot;\mathbf{r}) \}_{k=1}^K$: 
	\begin{equation} 
		\hat{f}(\boldsymbol{x}) = \sum_{k=1}^K [ f, \pi_k(\cdot;\mathbf{r}) ]_N \pi_k(\boldsymbol{x};\mathbf{r}) 
	\end{equation} 
	Integration therefore yields 
	\begin{equation} 
		I[\hat{f}] 
			= \sum_{k=1}^K [ f, \pi_k(\cdot;\mathbf{r}) ]_N I[\pi_k(\cdot;\mathbf{r})] 
			= \sum_{n=1}^N \left( \sum_{k=1}^K \pi_k(\mathbf{x}_n;\mathbf{r}) I[\pi_k(\cdot;\mathbf{r})] \right) f(\mathbf{x}_n) 
			= C_N[f]. 
	\end{equation} 
	The last equality follows from \cref{eq:LS-sol-explicit}. 
	Building upon this connection, in \cite{migliorati2020stable} high-order randomized CFs for independent random points were constructed. 
	These were shown to be positive and exact with a high probability if the number of (random) data points is sufficiently larger than $K$. 
	In particular, it was stated that the proportionality between $N$ and $K$ should be at least quadratic. 
	This is in accordance with the results presented here. 
	Also the fundamental work \cite{sloan1995polynomial} on hyperinterpolation should be mentioned in this context.
\end{remark}

\subsection{Constructing Positive Interpolatory Cubature Formulas} 
\label{sub:interpol_CFs}

Next, we describe how provable positive and exact LS-CFs can be used to construct interpolatory CFs with the same properties. 
In contrast to LS-CFs, interpolatory CFs use a smaller subset of $N = K$ data points, where $K$ denotes the dimension of the function space $\mathcal{F}_K(\Omega)$ for which the original LS-CF is exact. 
In fact, there exist many different approaches to this task. 
For instance, in \cite{bremer2010nonlinear} a nonlinear optimization procedure is used to downsample formulas one data point at a time (although the work focuses on the one-dimensional case). 
It is also possible to formulate \cref{eq:LS-sol} as a basis pursuit problem which can then be solved by linear programming tools \cite{glaubitz2020stableCFs}.
Another option is Caratheodory--Tchakaloff subsampling \cite{piazzon2017caratheodoryLS,bos2019catchdes}, which may be implemented using linear (or quadratic) programming. 
Finally, NNLS \cite{lawson1995solving} could be used to recover a sparse nonnegative weight vector (see \cite{huybrechs2009stable} for the univariate case and \cite{sommariva2015compression} for the multivariate case). 
Another procedure is a method due to Steinitz \cite{steinitz1913bedingt} (also see \cite{davis1967construction,wilson1969general}). 
While this method might be less efficient than some of the aforementioned approaches, it is fairly simple and---as the construction of the positive LS-CFs---only relies on basic operations from linear algebra. 
Details on Steinitz' method can found in \cref{app:Steinitz}. 
\section{Numerical Results} 
\label{sec:numerical} 

\begin{figure}[tb]
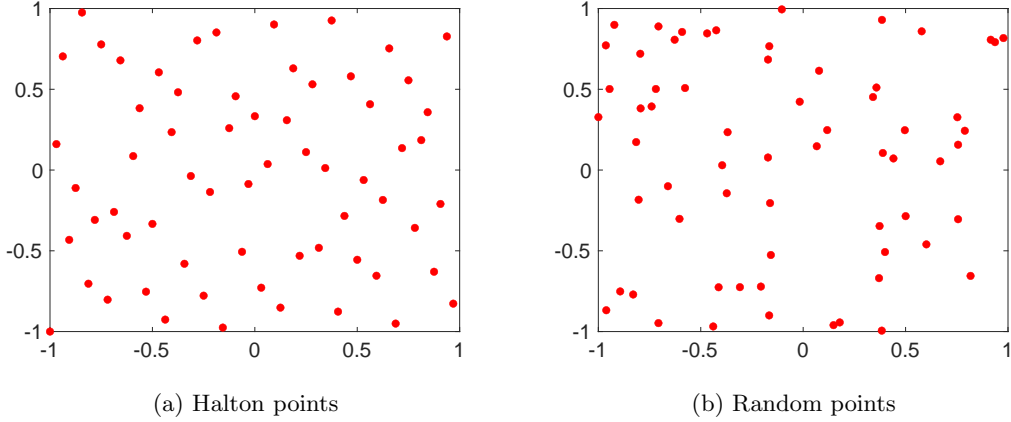

	\centering
  	\begin{subfigure}[b]{0.45\textwidth}
    		\includegraphics[width=\textwidth]{%
      		plots/points_Halton} 
    		\caption{Halton points}
    		\label{fig:points_Halton}
  	\end{subfigure}%
	~ 
	\begin{subfigure}[b]{0.45\textwidth}
		\includegraphics[width=\textwidth]{%
      		plots/points_random} 
    		\caption{Random points}
    		\label{fig:points_ranom}
  	\end{subfigure}%
  	\caption{Illustration of ($N=64$) Halton and random points on $\Omega = [-1,1]^2$}
  	\label{fig:points}
\end{figure}

We now come to present several numerical tests to illustrate our theoretical findings as well as to demonstrate the performance of the positive high-order LS-CFs and corresponding interpolatory CFs. 
All experiments were performed using a \emph{2.6 GHz 6-Core Intel Core i7} processor with 32 GB of random access memory (RAM). 
The MATLAB code used to generate the subsequent numerical results can be found on GitHub\footnote{See \url{https://github.com/jglaubitz/positive_CFs}}. 
We consider two different types of data points: 
(1) Halton points, which are deterministic and from a low-discrepancy sequence\footnote{Recall that low-discrepancy sequences are a subclass of equidistributed sequences.}  \cite{halton1960efficiency,niederreiter1992random,kuipers2012uniform}, and  
(2) uniformly distributed random points.  
An illustration for these points is provided by \cref{fig:points}. 

\begin{remark}\label{rem:conditioning}
To avoid ill-conditioning caused by a poor choice of $\{\varphi_k\}_{k=1}^K$ we recommend to use continuous orthonormal basis functions satisfying 
\begin{eq}
	\int_{\Omega} \varphi_k(\boldsymbol{x}) \varphi_l(\boldsymbol{x}) \intd \boldsymbol{x} = \delta_{kl}, \quad k,l=1,\dots,K, 
\end{eq} 
to formulate the linear system \cref{eq:ex-system} that is solved in the LS sense to obtain the weights of the LS-CFs. 
Observe that in this case 
\begin{eq}
	\lim_{N \to \infty} \frac{1}{N} \Phi \Phi^T 
		= \lim_{N \to \infty} \left( \frac{1}{N} \sum_{n=1}^N \varphi_k(\mathbf{x}_n) \varphi_l(\mathbf{x}_n) \right)_{k,l=1}^K 
		= \left( \int_{\Omega} \varphi_k(\boldsymbol{x}) \varphi_l(\boldsymbol{x}) \intd \boldsymbol{x} \right)_{k,l=1}^K 
		= I,
\end{eq} 
where $I$ denotes the $(K \times K)$ identity matrix. 
Then, we further find\footnote{If $A \in \R^{K \times K}$ has eigenvalues $\lambda_1,\dots,\lambda_K$ and $p$ is a polynomial, then the matrix $p(A)$ has eigenvalues $p(\lambda_1),\dots,p(\lambda_K)$.}
\begin{eq}
	\lim_{N \to \infty} \kappa(\Phi) 
		= \lim_{N \to \infty} \frac{\lambda_{\rm max}(\Phi \Phi^T)}{\lambda_{\rm min}(\Phi \Phi^T)} 
		= \lim_{N \to \infty} \frac{\lambda_{\rm max}(\frac{1}{N} \Phi \Phi^T)}{\lambda_{\rm min}(\frac{1}{N} \Phi \Phi^T)} 
		= 1 
\end{eq} 
for the condition number of $\Phi$, where $\lambda_{\rm max}(A)$ and $\lambda_{\rm min}(A)$ are the largest and smallest eigenvalue of the matrix $A$. 
\end{remark}

\subsection{Comparison of Different Subsampling Methods} 
\label{sub:comp_subsample}

We start by debate the subsampling method used to obtain a positive interpolatory CF from a given positive LS-CF, both exact for the same function space $\mathcal{F}_K(\Omega) \subset C(\Omega)$. 
A few options to construct such positive interpolatory CFs were discussed in \cref{sub:interpol_CFs}. 
An exhaustive comparison of all these approaches would exceed the scope of this work, but we at least provide a rudimentary comparison of three especially simple methods: 
(i) Steinitz' method, see \cite{steinitz1913bedingt,davis1967construction,wilson1969general} as well as \cref{app:Steinitz}; 
(ii) NNLS \cite{lawson1995solving} (also see \cite{huybrechs2009stable} for the univariate case and \cite{sommariva2015compression} for the multivariate case); and 
(iii) basis pursuit formulated as a linear programming problem \cite{glaubitz2020stableCFs}. 

\begin{figure}[tb]
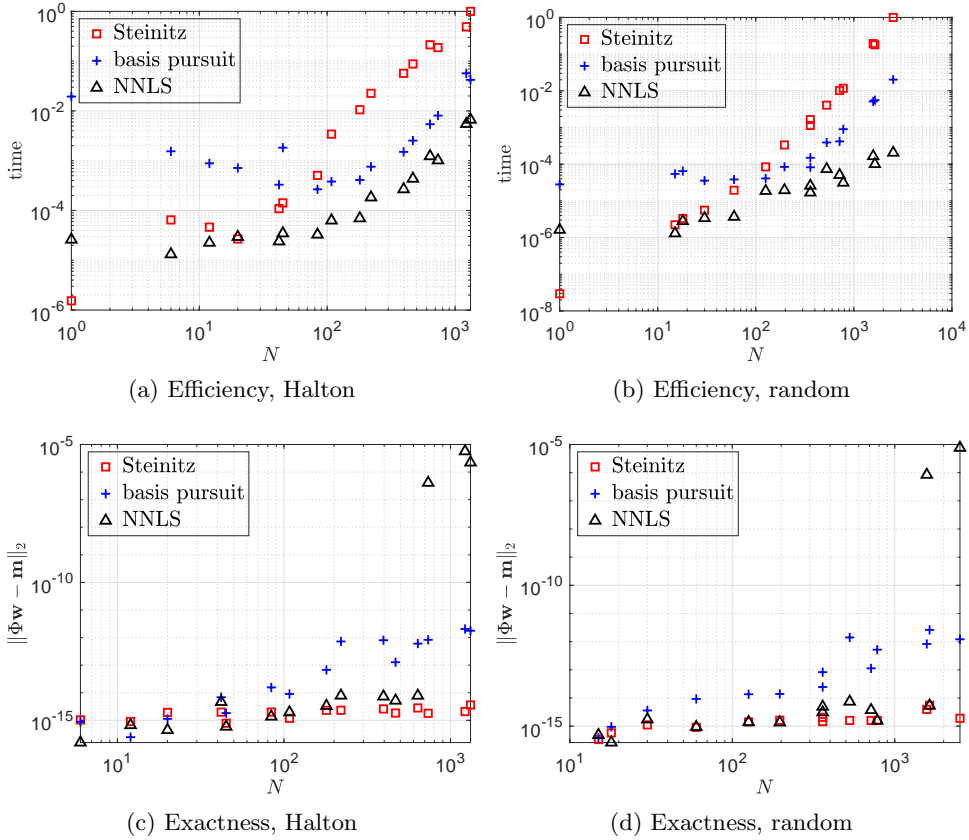

	\centering
  	\begin{subfigure}[b]{0.4\textwidth}
		\includegraphics[width=\textwidth]{%
      		plots/compare_interpolCFs_NvsTime_dim2_Halton} 
    		\caption{Efficiency, Halton}
    		\label{fig:comparison_efficiency_Halton}
  	\end{subfigure}%
	~ 
	\begin{subfigure}[b]{0.4\textwidth}
		\includegraphics[width=\textwidth]{%
      		plots/compare_interpolCFs_NvsTime_dim2_random} 
    		\caption{Efficiency, random}
    		\label{fig:comparison_efficiency_random}
  	\end{subfigure}%
  	\\
  	\begin{subfigure}[b]{0.4\textwidth}
		\includegraphics[width=\textwidth]{%
      		plots/compare_interpolCFs_NvsError_dim2_Halton} 
    		\caption{Exactness, Halton}
    		\label{fig:comparison_error_Halton}
  	\end{subfigure}%
  	~ 
	\begin{subfigure}[b]{0.4\textwidth}
		\includegraphics[width=\textwidth]{%
      		plots/compare_interpolCFs_NvsError_dim2_random} 
    		\caption{Exactness, random}
    		\label{fig:comparison_error_random}
  	\end{subfigure}%
  	\caption{
		A comparison between different subsampling methods to construct a positive interpolatory CF from a positive LS-CF. 
		All tests are performed for the domain $\Omega = [-1,1]^2$ with $\omega \equiv 1$. 
	}
  	\label{fig:subsampling_comparison}
\end{figure}

\cref{fig:subsampling_comparison} provides a comparison of the interpolatory CFs resulting from these three methods. 
The underlying positive LS-CFs was constructed for $\Omega = [-1,1]^2$ with $\omega \equiv 1$ and to be exact for algebraic polynomials up to a total degree $m \in \{0,1,\dots,14\}$. 
For this case all three methods produced positive and interpolatory ($N = K$) CFs. 
Thus, in \cref{fig:subsampling_comparison} we only focus on the efficiency and ``exactness" of these methods. 
Efficiency is measured by the time it took the respective method to produce a positive interpolatory CF from a given positive LS-CF. 
The normalized times are displayed in \cref{fig:comparison_efficiency_Halton,fig:comparison_efficiency_random}. 
Based on these results it might be argued that NNLS is the most efficient method, while the Steinitz approach is the least efficient one. 
On the other hand, ``exactness" is measured by the error in the exactness conditions, that is present in the produced interpolatory CF. 
This is measured by the residual of the moment conditions, $\|\Phi \mathbf{w} - \mathbf{m}\|_2$.
Such errors can be introduced due to computing in finite arithmetics (rounding errors) or a method not arriving at a solution within the performed number of iterations. 
We observe in \cref{fig:comparison_error_Halton,fig:comparison_error_random} that especially the NNLS method suffers from undesirably high errors. 
Regarding exactness, we observe that the Steinitz method performs best. 
We also report that the basis pursuit approach sometimes did not produce a positive interpolatory CF for total degrees $m > 14$.   
Henceforth, we restrict ourselves to positive interpolatory CFs constructed by the Steinitz method, although we observe it to be inferior in terms of efficiency. 
This does not take into account that these methods are often used to solve partial differential equations and must be fast.
A more detailed investigation and comparison of different subsampling strategies would therefore be of interest. 

\begin{remark} 
	In our numerical tests, we used the Matlab built-routine \href{https://www.mathworks.com/help/matlab/ref/lsqnonneg.html}{lsqnonneg}. 
	However, it should be pointed out that there are NNLS methods available, which might perform even faster. 
	See \cite{dessole2020accelerating} and references therein as well as \cite{slawski2022web}.  
\end{remark}

\subsection{Ratio Between $N$ and $K$} 
\label{sub:num_ratio} 

We now invetsigate the relation between the number of data points $N$ and the dimension of the function space $K$ for which the LS-CF is positive. 
Recall that \cref{cor:main} stated that the (weighted) LS-CF is ensured to be positive for fixed $K$ if a sufficiently large number $N$ of (equidistributed) data points is used. 
Let $N(K)$ denote the lowest number of data points for the LS-CF (with discrete weights as in \cref{cor:main}) to be positive.
Numerically, we found the relation between $K$ and $N(K)$ to be of the form $N(K) = C K^s$ with $s$ being close to or even below $2$ in many different cases.

\begin{figure}[tb]
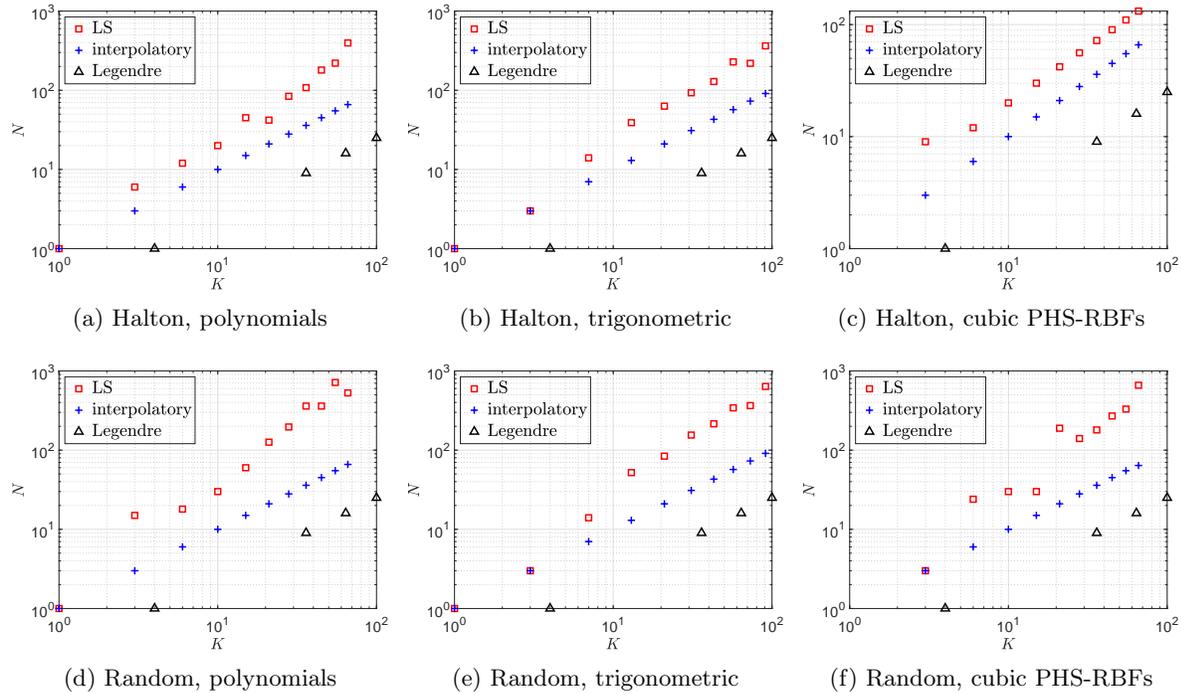

	\centering
	\begin{subfigure}[b]{0.32\textwidth}
		\includegraphics[width=\textwidth]{%
      		plots/ratio_dim2_cube_1_algebraic_Halton} 
    		\caption{Halton, polynomials}
    		\label{fig:ratio_dim2_cube_1_algebraic_Halton}
  	\end{subfigure}%
	~
	\begin{subfigure}[b]{0.32\textwidth}
		\includegraphics[width=\textwidth]{%
      		plots/ratio_dim2_cube_1_trig_Halton} 
    		\caption{Halton, trigonometric}
    		\label{fig:ratio_dim2_cube_1_trig_Halton}
  	\end{subfigure}%
	~ 
	\begin{subfigure}[b]{0.32\textwidth}
		\includegraphics[width=\textwidth]{%
      		plots/ratio_dim2_cube_1_cubic_Halton} 
    		\caption{Halton, cubic PHS-RBFs}
    		\label{fig:ratio_dim2_cube_1_cubic_Halton}
  	\end{subfigure}%
	\\ 
	\begin{subfigure}[b]{0.32\textwidth}
		\includegraphics[width=\textwidth]{%
      		plots/ratio_dim2_cube_1_algebraic_random} 
    		\caption{Random, polynomials}
    		\label{fig:ratio_dim2_cube_1_algebraic_random}
  	\end{subfigure}%
	~ 
	\begin{subfigure}[b]{0.32\textwidth}
		\includegraphics[width=\textwidth]{%
      		plots/ratio_dim2_cube_1_trig_random} 
    		\caption{Random, trigonometric}
    		\label{fig:ratio_dim2_cube_1_trig_random}
  	\end{subfigure}%
	~
  	\begin{subfigure}[b]{0.32\textwidth}
		\includegraphics[width=\textwidth]{%
      		plots/ratio_dim2_cube_1_cubic_random} 
    		\caption{Random, cubic PHS-RBFs}
    		\label{fig:ratio_dim2_cube_1_cubic_random}
  	\end{subfigure}%
  	\caption{Ratio between $K$ and $N = N(K)$ for $\Omega = [-1,1]^2$ and $\omega \equiv 1$.  
	Compared are the LS-CF, the interpolatory CF obtained by subsampling, and the product Legendre rule.}
  	\label{fig:ratio_2d_cube_1}
\end{figure}

\begin{table}[tb]
\renewcommand{\arraystretch}{1.3}
\centering 
  	\begin{tabular}{c c c c c c c c c} 
		\toprule 
    		\multicolumn{9}{c}{LS-CF for the Cube Based on Algebraic Polynomials} \\ \hline 
    		\multicolumn{3}{c}{} & \multicolumn{3}{c}{$\omega \equiv 1$} & & \multicolumn{2}{c}{$\omega(\boldsymbol{x}) = (1-x_1^2)^{1/2} \dots (1-x_q^2)^{1/2}$} \\ \hline 
    		$q$ & \multicolumn{2}{c}{} & Legendre & Halton & random & & Halton & random \\ \hline 
		$2$ & s  & & 1.9e-0  	& 1.9e-0 	& 1.2e-0 
					  & 				& 1.9e-0 	& 1.7e-0 \\
    		   	   & C & &  3.0e-1	& 9.9e-2 	& 3.4e-0 
			   		 & 				& 9.2e-2 	& 1.3e-0 \\ \hline 
		$3$ & s  & & 2.8e-0 	& 1.4e-0 	& 1.4e-0 
					 & 				& 2.4e-0 	& 1.8e-0 \\
    		       & C & & 2.1e-1 	& 4.4e-1 	& 2.9e-0 
		       		 & 				& 2.9e-3 	& 7.1e-1 \\ \hline 
		 \\	  
	\end{tabular} 
	\\ 
  	\begin{tabular}{c c c c c}  
    		\multicolumn{5}{c}{LS-CF for the Two-Dimensional Cube, $\omega \equiv 1$} \\ \hline 
    		$\mathcal{F}_K(\Omega)$ & \multicolumn{2}{c}{} & Halton & random \\ \hline 
		algebraic polynomials 		
			& s  & & 1.9e-0 	& 1.2e-0 \\
    		    & C & & 9.9e-2 	& 3.4e-0 \\ \hline 
		trigonometric polynomials 
			& s  & & 1.2e-0 & 1.3e-0 \\
    		    & C & & 1.3e-0 & 1.3e-0 \\ \hline 
		cubic PHS-RBFs 
			& s  & & 9.9e-1 & 1.6e-0 \\
    		    & C & & 2.0e-0 & 6.0e-1 \\ \hline 
		\bottomrule
	\end{tabular} 
\caption{LS fit for the parameters $C$ and $s$ in the model $N(K) = C K^s$}
\label{tab:LS-fit}
\end{table} 

\cref{fig:ratio_2d_cube_1} illustrates this for the hyper-cube $\Omega = [-1,1]^2$ with weight function $\omega \equiv 1$. 
The corresponding LS-CFs were constructed to be exact for algebraic polynomials up to a fixed total degree (\cref{fig:ratio_dim2_cube_1_algebraic_Halton,fig:ratio_dim2_cube_1_algebraic_random}), trigonometric polynomials up to a fixed degree (\cref{fig:ratio_dim2_cube_1_trig_Halton,fig:ratio_dim2_cube_1_trig_random}), and cubic polyharmonic RBFs (PHS-RBFs) augmented with a constant (\cref{fig:ratio_dim2_cube_1_cubic_Halton,fig:ratio_dim2_cube_1_cubic_random}). 
See \cite{fornberg2015primer} and references therein for more details on RBFs. 
\cref{fig:ratio_2d_cube_1} also reports on the ratio between $K$ and $N(K)$ for the product Legendre rule and the positive interpolatory CF. 
The positive interpolatory CF was obtained from the LS-CF by subsampling using the Steinitz method.

The numerically observed values for $C$ and $s$ in the assumed relation $N(K) = C K^s$ for some further cases are listed in \cref{tab:LS-fit}. 
The reported parameters $C$ and $s$ were obtained by performing an LS fit for these based on the values of $K$ and $N = N(K)$ for maximum total degree ${m \in \{0,1,\dots,10\}}$. 
We note that the results reported here appear to be in accordance with similar observations made in previous works for certain special cases; see \cite{wilson1970necessary,huybrechs2009stable,glaubitz2020stable,glaubitz2020stableQRs,glaubitz2020shock} (for univariate LS-QFs) and \cite{glaubitz2020stableCFs} (for multivariate LS-CFs based on polynomials). 
Interestingly, similar ratios were also observed in other contexts, such as discrete LS function approximations \cite{cohen2013stability,cohen2017optimal} and stable high-order randomized CFs based on these \cite{migliorati2020stable}.

\subsection{Polynomial Based Cubature Formulas on the Hyper-Cube}

We now investigate the accuracy of the positive high-order LS-CFs and corresponding interpolatory CFs. 
To this end, we consider the hyper-cube $\Omega = [0,1]^d$ with $\omega \equiv 1$ and the following Genz test functions \cite{genz1984testing} (also see \cite{van2020adaptive}): 
\begin{equation}\label{eq:Genz}
\begin{aligned}
	g_1(\boldsymbol{x}) 
		& = \prod_{i=1}^d \left( a_i^{-2} + (x_i - b_i)^2 \right)^{-1} \quad 
		&& \text{(product peak)}, \\
	g_2(\boldsymbol{x}) 
		& = \left( 1 + \sum_{i=1}^d a_i x_i \right)^{-(d+1)} \quad 
		&& \text{(corner peak)}, \\
	g_3(\boldsymbol{x}) 
		& = \exp \left( - \sum_{i=1}^d a_i^2 ( x_i - b_i )^2 \right) \quad 
		&& \text{(Gaussian)}
\end{aligned}
\end{equation} 
These functions are designed to have different difficult characteristics for numerical integration routines.
The vectors $\mathbf{a} = (a_1,\dots,a_q)^T$ and $\mathbf{b} = (b_1,\dots,b_q)^T$ respectively contain (randomly chosen) shape and translation parameters. 
For each case, the experiment was repeated $20$ times, with the vectors $\mathbf{a}$ and $\mathbf{b}$ being drawn randomly from $[0,1]^d$.

\begin{figure}[tb]
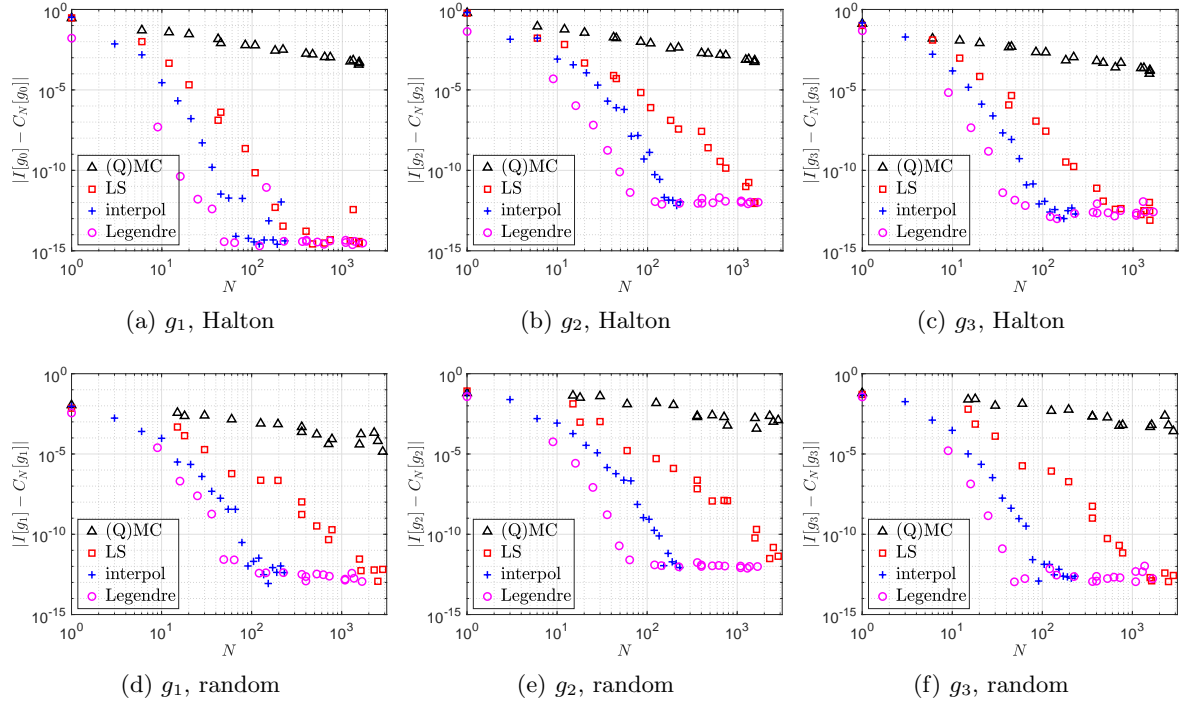

	\centering
	\begin{subfigure}[b]{0.32\textwidth}
		\includegraphics[width=\textwidth]{%
      		plots/accuracy_Genz1_dim2_1_Halton_Steinitz} 
    		\caption{$g_1$, Halton}
    		\label{fig:accuracy_Genz1_dim2_1_Halton_Steinitz}
  	\end{subfigure}%
	~ 
	\begin{subfigure}[b]{0.32\textwidth}
		\includegraphics[width=\textwidth]{%
      		plots/accuracy_Genz2_dim2_1_Halton_Steinitz} 
    		\caption{$g_2$, Halton}
    		\label{fig:accuracy_Genz2_dim2_1_Halton_Steinitz}
  	\end{subfigure}%
	~
	\begin{subfigure}[b]{0.32\textwidth}
		\includegraphics[width=\textwidth]{%
      		plots/accuracy_Genz3_dim2_1_Halton_Steinitz} 
    		\caption{$g_3$, Halton}
    		\label{fig:accuracy_Genz3_dim2_1_Halton_Steinitz}
  	\end{subfigure}%
	\\ 
	\begin{subfigure}[b]{0.32\textwidth}
		\includegraphics[width=\textwidth]{%
      		plots/accuracy_Genz1_dim2_1_random_Steinitz} 
    		\caption{$g_1$, random}
    		\label{fig:accuracy_Genz1_dim2_1_random_Steinitz}
  	\end{subfigure}%
	~ 
	\begin{subfigure}[b]{0.32\textwidth}
		\includegraphics[width=\textwidth]{%
      		plots/accuracy_Genz2_dim2_1_random_Steinitz} 
    		\caption{$g_2$, random}
    		\label{fig:accuracy_Genz2_dim2_1_random_Steinitz}
  	\end{subfigure}%
	~ 
	\begin{subfigure}[b]{0.32\textwidth}
		\includegraphics[width=\textwidth]{%
      		plots/accuracy_Genz3_dim2_1_random_Steinitz} 
    		\caption{$g_3$, random}
    		\label{fig:accuracy_Genz3_dim2_1_random_Steinitz}
  	\end{subfigure}%
  	\caption{
		Errors for Genz' test functions on $\Omega = [0,1]^2$ with $\omega \equiv 1$. 
		The LS- and interpolatory CF are based on multivariate polynomials of increasing total degree ($\mathcal{F}_K(\Omega) = \mathbb{P}_m(\R^2)$). 
	}
  	\label{fig:errors_cube_1_dim2}
\end{figure}

\begin{figure}[tb]
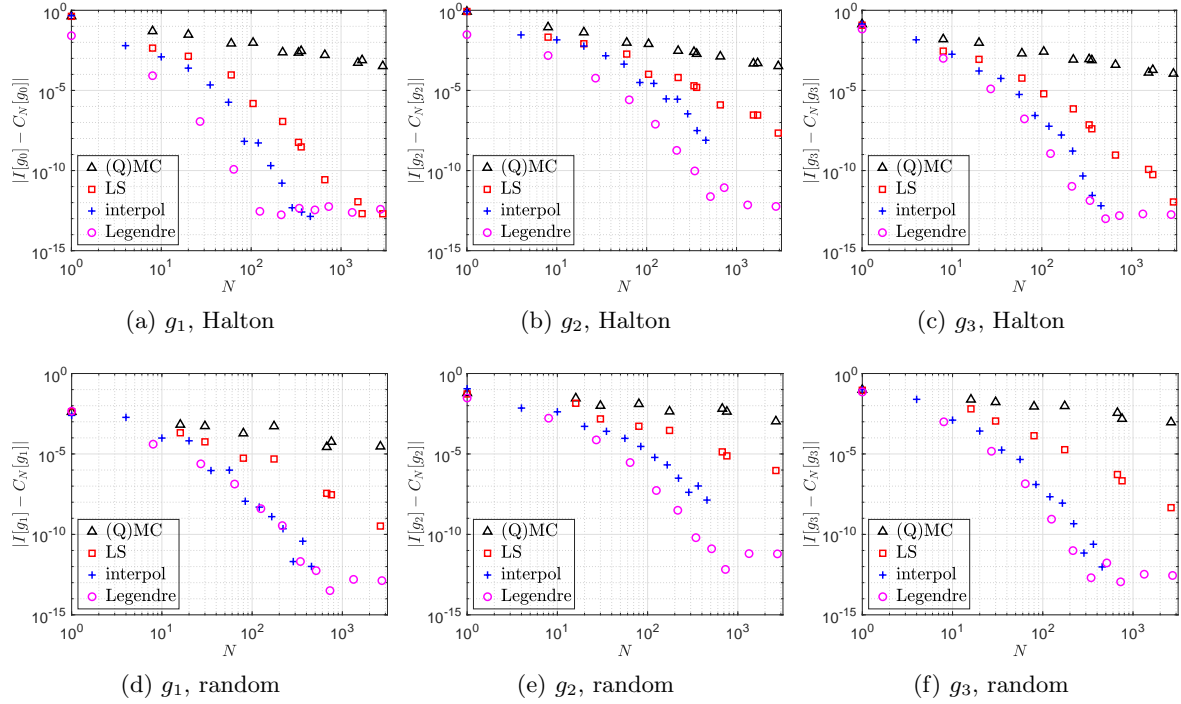

	\centering
  	\begin{subfigure}[b]{0.32\textwidth}
		\includegraphics[width=\textwidth]{%
      		plots/accuracy_Genz1_dim3_1_Halton_Steinitz} 
    		\caption{$g_1$, Halton}
    		\label{fig:accuracy_Genz1_dim3_1_Halton_Steinitz}
  	\end{subfigure}%
	~ 
	\begin{subfigure}[b]{0.32\textwidth}
		\includegraphics[width=\textwidth]{%
      		plots/accuracy_Genz2_dim3_1_Halton_Steinitz} 
    		\caption{$g_2$, Halton}
    		\label{fig:accuracy_Genz2_dim3_1_Halton_Steinitz}
  	\end{subfigure}%
	~
	\begin{subfigure}[b]{0.32\textwidth}
		\includegraphics[width=\textwidth]{%
      		plots/accuracy_Genz3_dim3_1_Halton_Steinitz} 
    		\caption{$g_3$, Halton}
    		\label{fig:accuracy_Genz3_dim3_1_Halton_Steinitz}
  	\end{subfigure}%
	\\ 
	\begin{subfigure}[b]{0.32\textwidth}
		\includegraphics[width=\textwidth]{%
      		plots/accuracy_Genz1_dim3_1_random_Steinitz} 
    		\caption{$g_1$, random}
    		\label{fig:accuracy_Genz1_dim3_1_random_Steinitz}
  	\end{subfigure}%
	~ 
	\begin{subfigure}[b]{0.32\textwidth}
		\includegraphics[width=\textwidth]{%
      		plots/accuracy_Genz2_dim3_1_random_Steinitz} 
    		\caption{$g_2$, random}
    		\label{fig:accuracy_Genz2_dim3_1_random_Steinitz}
  	\end{subfigure}%
	~ 
	\begin{subfigure}[b]{0.32\textwidth}
		\includegraphics[width=\textwidth]{%
      		plots/accuracy_Genz3_dim3_1_random_Steinitz} 
    		\caption{$g_3$, random}
    		\label{fig:accuracy_Genz3_dim3_1_random_Steinitz}
  	\end{subfigure}%
  	\caption{
		Errors for Genz' test functions on $\Omega = [0,1]^3$ with $\omega \equiv 1$. 
		The LS- and interpolatory CF are based on multivariate polynomials of increasing total degree ($\mathcal{F}_K(\Omega) = \mathbb{P}_m(\R^3)$). 
	}
  	\label{fig:errors_cube_1_dim3}
\end{figure}

\cref{fig:errors_cube_1_dim2,fig:errors_cube_1_dim3} illustrate the errors of different CFs for this test case in two ($d=2$) and three ($d=3$) dimensions, respectively. 
Compared are the positive high-order LS-CF, the corresponding interpolatory CF, the  (Q)MC method, and a product Legendre method. 
In this context, the term ``(Q)MC" refers to a CF of the form 
\begin{eq} 
	C_N[g] = \frac{| \Omega |}{N} \sum_{n=1}^N g(\mathbf{x}_n) 
	\quad \text{with} \quad 
	g = f \omega.
\end{eq} 
This corresponds to an MC method if the data points are sampled randomly and to a QMC method if the data points are not fully random but correspond to a (semi- or fully-) deterministic low-discrepancy sequence \cite{niederreiter1992random,caflisch1998monte,dick2013high}. 
Further, the LS-CF was constructed to be exact for all two- and three-dimensional polynomials up to total degree $m \in \{0,1,\dots,20\}$ and $m \in \{0,1,\dots,12\}$, respectively. 
For this simple test problem, it can be observed from \cref{fig:errors_cube_1_dim2,fig:errors_cube_1_dim3} that the product Legendre rule yields the most accurate results in most cases, followed by the interpolatory and underlying positive high-order LS-CF.

\subsection{A Nonconstant Weight Function}

We next consider a test cases with the nonconstant weight function $\omega(x_1,x_2) = (1-x_1^2)^{1/2} (1-x_2^2)^{1/2}$ on the hyper-cube $C_2 = [-1,1]^2$ in combination with the test function $f(x_1,x_2) = \arccos(x_1) \arccos(x_2)$. 
The results can be found in \cref{fig:accuracy_misc}. 
The LS-CFs were again constructed to be exact for algebraic polynomials up to an increasing total degree $m \in \{0,1,\dots,20\}$. 
Moreover, the LS-CF and corresponding interpolatory CF are again compared to a (Q)MC methods applied to the same data points as the LS-CF and a (transformed) product Legendre rule applied to $f \omega$ as an integrand. 
We can note from \cref{fig:accuracy_misc} that the positive interpolatory CF---and for Halton points even the LS-CF---yield more accurate results than the product Legendre rule this time.  

\begin{figure}[tb]
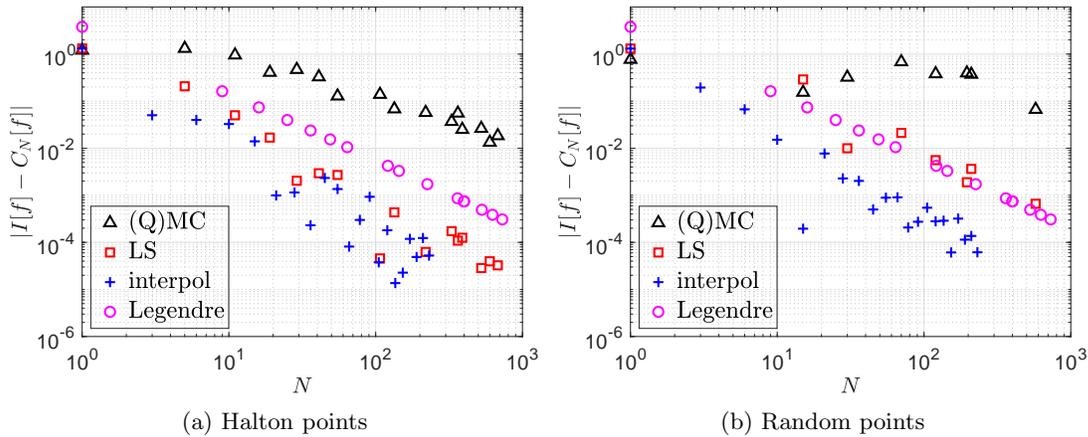

	\centering
  	\begin{subfigure}[b]{0.45\textwidth}
		\includegraphics[width=\textwidth]{%
      		plots/accuracy_misc1_dim2_cube_C2k_Halton_Steinitz} 
    		\caption{Halton points}
    		\label{fig:accuracy_misc1_dim2_cube_C2k_Halton_Steinitz}
  	\end{subfigure}%
	~ 
	\begin{subfigure}[b]{0.45\textwidth}
		\includegraphics[width=\textwidth]{%
      		plots/accuracy_misc1_dim2_cube_C2k_random_Steinitz} 
    		\caption{Random points}
    		\label{fig:accuracy_misc1_dim2_cube_C2k_random_Steinitz}
  	\end{subfigure}%
  	\caption{
		Errors for the polynomial-based LS-CF, the corresponding interpolatory CFs, the (Q)MC method using the same data points as the LS-CF, and a (transformed) product Legendre rule  
	}
  	\label{fig:accuracy_misc}
\end{figure}

\subsection{A Nonstandard Domain} 

As mentioned before, the proposed LS-CF has the advantage of being easily applicable to nonstandard domains, which is demonstrated now. 
Consider the two-dimensional domain $\Omega$ that is illustrated in \cref{fig:nonstandard_illustration_Halton,fig:nonstandard_illustration_random} with weight function $\omega \equiv 1$. 
\cref{fig:nonstandard_errors_Halton,fig:nonstandard_errors_random} report on the errors of the LS-CF and the corresponding interpolatory CF compared to a (Q)MC method. 
These are again constructed to be exact for algebraic polynomials up to an increasing total degree $m \in \{0,1,\dots,14\}$, and the test function is ${f(x_1,x_2) = \exp( x_1^2 + x_2^2 )}$. 
It should be noted that even in this case, of a nonstandard domain, the positive high-order LS-CF and the corresponding interpolatory CF display encouraging rates of convergence. 
At the same time, we were unable to use a simple product Legendre rule for this problem.  

\begin{figure}[tb]
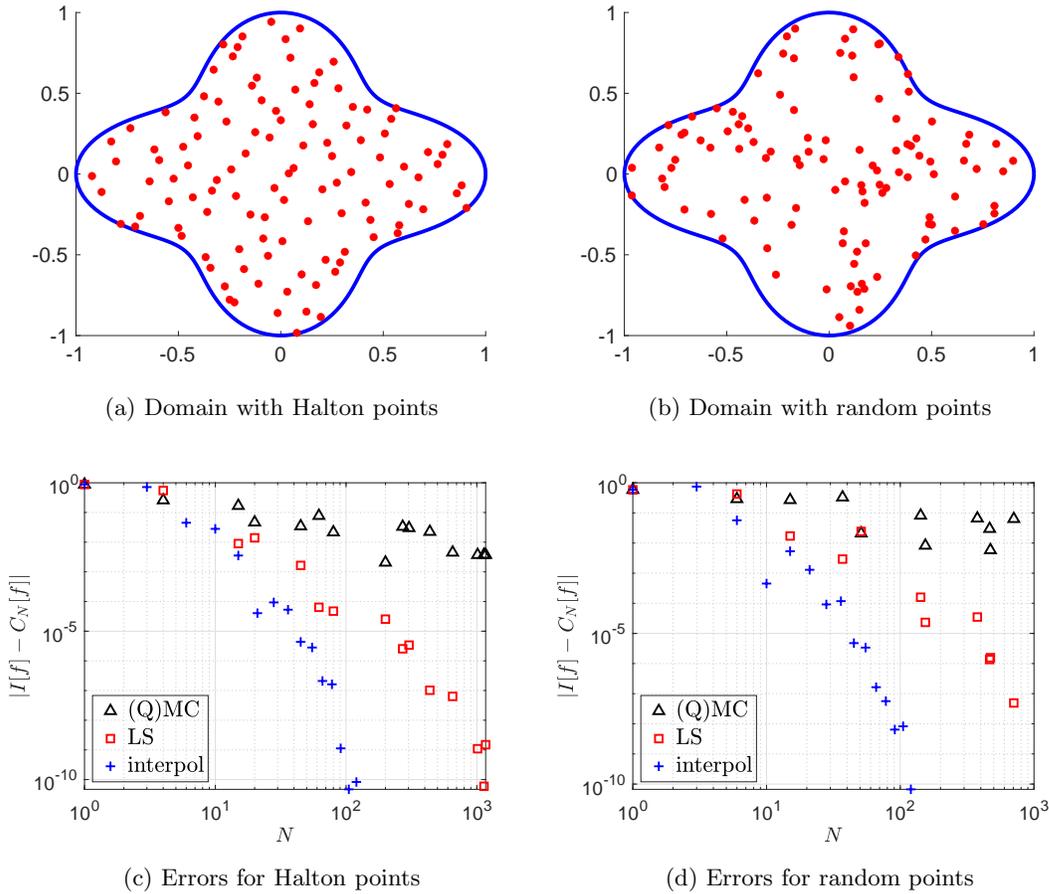

	\centering
  	\begin{subfigure}[b]{0.45\textwidth}
		\includegraphics[width=\textwidth]{%
      		plots/nonstandard_illustration_Halton} 
    		\caption{Domain with Halton points}
    		\label{fig:nonstandard_illustration_Halton}
  	\end{subfigure}%
	~
	\begin{subfigure}[b]{0.45\textwidth}
		\includegraphics[width=\textwidth]{%
      		plots/nonstandard_illustration_random} 
    		\caption{Domain with random points}
    		\label{fig:nonstandard_illustration_random}
  	\end{subfigure}%
	\\ 
	\begin{subfigure}[b]{0.45\textwidth}
		\includegraphics[width=\textwidth]{%
      		plots/nonstandard_errors_Halton} 
    		\caption{Errors for Halton points}
    		\label{fig:nonstandard_errors_Halton}
  	\end{subfigure}%
	~
	\begin{subfigure}[b]{0.45\textwidth}
		\includegraphics[width=\textwidth]{%
      		plots/nonstandard_errors_random} 
    		\caption{Errors for random points}
    		\label{fig:nonstandard_errors_random}
  	\end{subfigure}%
  	\caption{
		A two-dimensional nonstandard domain with Halton and random points and the corresponding errors for $\omega \equiv 1$ and $f(x_1,x_2) = \exp(x_1^2 + x_2^2)$ 
	}
  	\label{fig:nonstandard}
\end{figure}

\subsection{A Simple Periodic Problem} 

Beside being easily applicable to different domains and weight functions, the LS-CFs can also be constructed to be exact for different function spaces. 
The specific function space can be chosen based on some prior information about the function $f$ that is integrated. 
To start with a simple example, consider the periodic function $f(x) = \cos(\pi x) e^{\sin(\pi x)}$ on $\Omega = [-1,1]$ with weight function $\omega \equiv 1$. 
In this case, it is of advantage to construct the LS-CF to be exact for trigonometric functions rather than polynomials. 
The corresponding errors of the LS formulas on equidistant and random points that are exact for polynomials (``LS poly") and trigonometric functions (``LS trig") of increasing degree $m \in \{0,1,\dots,30\}$ are illustrated in \cref{fig:periodic}. 
We also chose this simple problem because it allows us to compare the LS formulas to the (composite) trapezoidal rule when equidistant points are used. 
In fact, we can observe in \cref{fig:periodic_equid} that the LS formula which is exact for trigonometric functions coincides with the trapezoidal rule in many cases. 
In particular, both of them yield highly accurate results for this periodic test problem. 
Furthermore, the LS formulas based on trigonometric functions is also able to yield more accurate results than the LS formula based on polynomials for random data points (see \cref{fig:periodic_random}). 

\begin{figure}[tb]
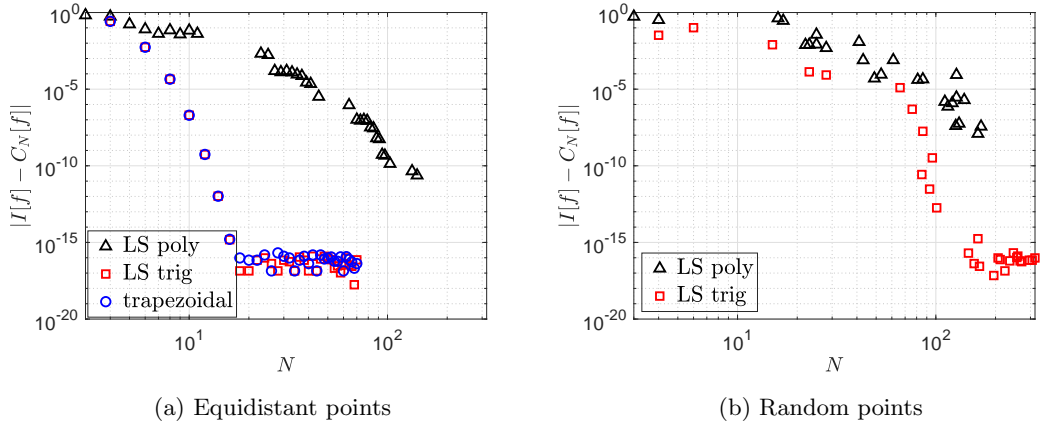

	\centering
  	\begin{subfigure}[b]{0.45\textwidth}
		\includegraphics[width=\textwidth]{%
      		plots/periodic_equid} 
    		\caption{Equidistant points}
    		\label{fig:periodic_equid}
  	\end{subfigure}%
	~
	\begin{subfigure}[b]{0.45\textwidth}
		\includegraphics[width=\textwidth]{%
      		plots/periodic_random} 
    		\caption{Random points}
    		\label{fig:periodic_random}
  	\end{subfigure}%
  	\caption{
		Errors for $f(x) = \cos(\pi x) e^{\sin(\pi x)}$ on $\Omega = [-1,1]$ with $\omega \equiv 1$ using stable high-order LS-CFs based on algebraic and trigonometric function spaces as well as the trapezoidal rule. 
	}
  	\label{fig:periodic}
\end{figure}

\subsection{Radial Basis Functions}

We next consider functions spaces based on positive definite radial basis functions (RBFs) \cite{fornberg2015primer}. 
CFs based on RBFs are a popular tool for scattered data \cite{sommariva2006numerical,fuselier2014kernel,reeger2018numerical,sommariva2021rbf}. 
This is because RBF interpolants can be ensured to uniquely exist for arbitrary point distributions, which is not the case for many other functions (Haar spaces) due to the Mairhuber--Curtis theorem \cite{mairhuber1956haar,curtis1959n}.\footnote{The Mairhuber--Curtis theorem tells us that if we want to have a well-posed multivariate scattered data interpolation problem, then the function space needs to depend on the data locations.}
The idea behind RBF-CFs is to form an RBF interpolant and to exactly integrate it. 
However, RBF formulas are not always ensured to be positive \cite{glaubitz2021towards2}, which can deteriorate their performance for noisy measurements. 
Here, we demonstrate that the LS approach can be used to stabilize RBF-CFs. 
This can be explained by an LS-CF replacing the (potentially unstable) RBF interpolant by a more stable LS approximation from the same RBF function space. 
\cref{fig:RBF} illustrates this for the RBF function space spanned by a constant and the functions $\varphi(\|\boldsymbol{x} - \mathbf{x}_k\|)$ on $\Omega = [0,1]^2$ using a Gaussian kernel $\varphi(r) = e^{(\varepsilon r)^2}$ with shape parameter $\varepsilon = 0.75$. 
Further the number of centers was $K \in \{1,2,\dots,60\}$. 
\cref{fig:RBF_min_weights} provides the values of the smallest weight for the CF based on exact integration of RBF interpolants (``RBF") and the stable LS-CF that is exact for the same RBF function space (``LS-RBF"). 
Note that the RBF formula is found to feature negative weights, which can result in stability issues, while the LS-RBF formula is having positive weights in all cases. 
Further, \cref{fig:RBF_Genz1_nonoise} reports on the errors of the RBF and LS-RBF formulas on $N$ random points applied to Genz' first test function $g_1$ on $\Omega = [0,1]^2$ with $\omega \equiv 1$. 
In this example, both formulas perform similarly. 
In \cref{,fig:RBF_Genz1_noise4,fig:RBF_Genz1_noise2} we repeat this experiment but add uniformly distributed noise of magnitude $10^{-4}$ and $10^{-2}$ to the function values at the data points. 
Observe that the accuracy of the RBF formulas deteriorates notably stronger than of the LS-RBF formula in the presence of noise, due to the improved stability of the latter. 
We made the same observation also for other point distributions and Genz test functions. 

\begin{figure}[tb]
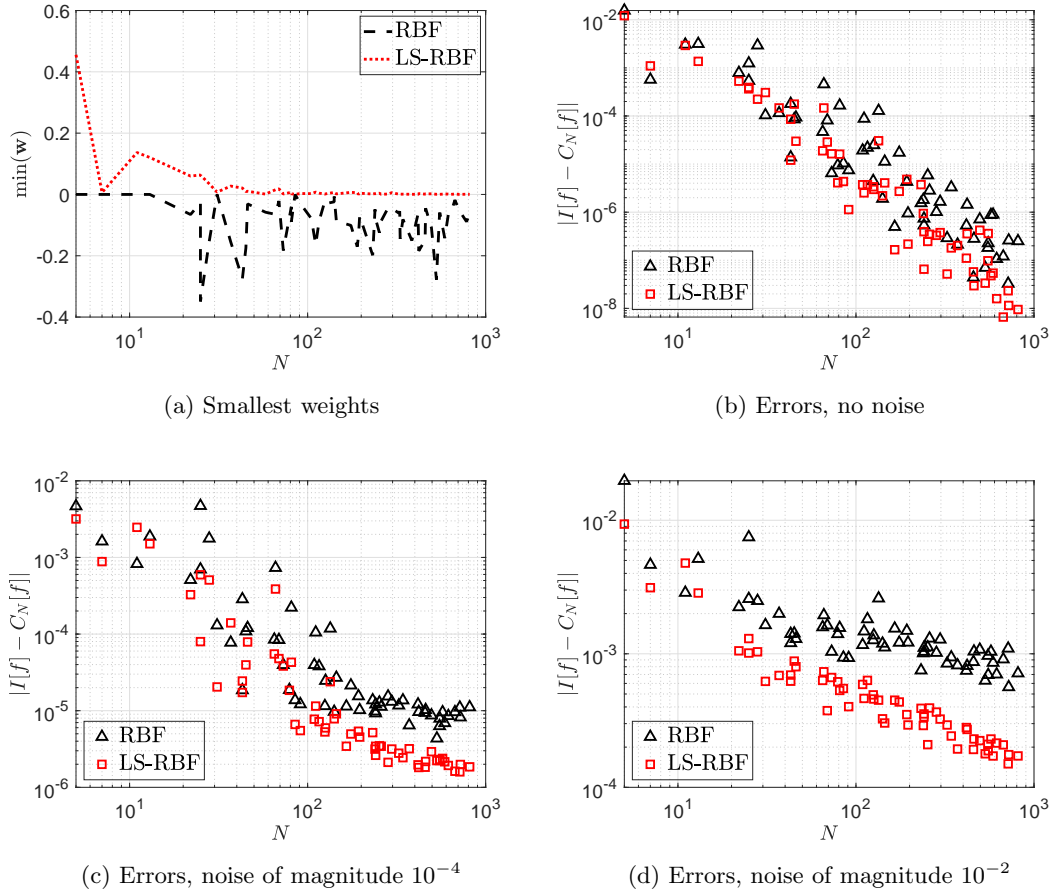

	\centering
  	\begin{subfigure}[b]{0.45\textwidth}
		\includegraphics[width=\textwidth]{%
      		plots/RBF_min_weights} 
    		\caption{Smallest weights}
    		\label{fig:RBF_min_weights}
  	\end{subfigure}%
	~
	\begin{subfigure}[b]{0.45\textwidth}
		\includegraphics[width=\textwidth]{%
      		plots/RBF_Genz1_nonoise} 
    		\caption{Errors, no noise}
    		\label{fig:RBF_Genz1_nonoise}
  	\end{subfigure}%
	\\ 
	\begin{subfigure}[b]{0.45\textwidth}
		\includegraphics[width=\textwidth]{%
      		plots/RBF_Genz1_noise4} 
    		\caption{Errors, noise of magnitude $10^{-4}$}
    		\label{fig:RBF_Genz1_noise4}
  	\end{subfigure}%
	~
	\begin{subfigure}[b]{0.45\textwidth}
		\includegraphics[width=\textwidth]{%
      		plots/RBF_Genz1_noise2} 
    		\caption{Errors, noise of magnitude $10^{-2}$}
    		\label{fig:RBF_Genz1_noise2}
  	\end{subfigure}%
  	\caption{
		Smallest weights and errors for Genz' first test function $g_1$ on $\Omega = [0,1]^2$ with $\omega \equiv 1$ using an RBF and LS-RBF formula. 
		In all cases random points and a Gaussian kernel $\varphi(r) = e^{(\varepsilon r)^2}$ with shape parameter $\varepsilon = 0.75$ was used.  
	}
  	\label{fig:RBF}
\end{figure}

\subsection{Summation-By-Parts Operators for General Function Spaces} 

Another useful application of the positive and exact LS-CFs presented here is the construction of summation-by-parts (SBP) operators for numerical differentiation, which are able to mimic integration-by-parts on a discrete level---thus the name.  
For this reason they are popular building blocks for systematically developing stable and high-order accurate numerical methods for time-dependent differential equations, see \cite{svard2014review,fernandez2014review}. 
SBP operators have been developed based on the idea that the solution is assumed to be well approximated by polynomials up to a certain degree, and the SBP operator should therefore be exact for them. 
However, polynomials might not provide the best approximation for some problems, and other function spaces should be considered. 
To illustrate this, consider the boundary layer solution in \cref{fig:expl_layer_approx}, which demonstrates the advantage of using an exponential instead of a polynomial approximation space. 

\begin{figure}[tb]
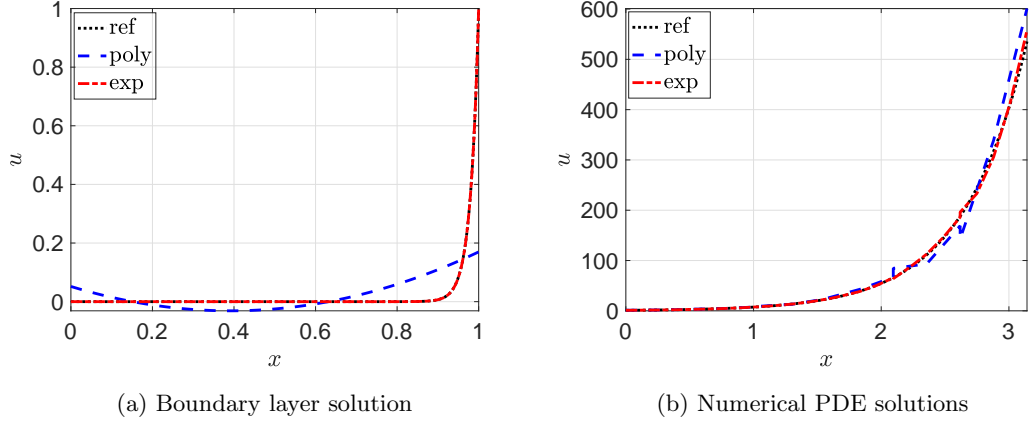

	\centering 
	\begin{subfigure}[b]{0.45\textwidth}
		\includegraphics[width=\textwidth]{%
      		plots/boundary_layer_approx} 
    		\caption{Boundary layer solution}
    		\label{fig:expl_layer_approx}
  	\end{subfigure}%
	~
  	\begin{subfigure}[b]{0.45\textwidth}
		\includegraphics[width=\textwidth]{%
      		plots/linear_source_SAT_K3_I6} 
    		\caption{Numerical PDE solutions}
    		\label{fig:linear_source_SAT_K3_I6}
  	\end{subfigure}%
  	\caption{
  	Left: Polynomial (``poly") and exponential (``exp") least-squares function approximation from the space $\Span \{ 1,x, x^2 \}$ and $\Span \{ 1, x, e^{\alpha x} \}$ with suitably chosen $\alpha \in \R$, respectively, to a boundary layer function. 
	Right: Numerical solutions for the linear advection problem at $t=3.5$ obtained by a multi-block FSBP-SAT scheme using $6$ blocks with a polynomial and exponential approximation space, $\mathbb{P}_2 = \Span \{ 1,x, x^2 \}$ and $\mathcal{E}_2 = \Span \{ 1, x, e^{x} \}$, respectively. 
  	}
  	\label{fig:expl_approx}
\end{figure} 

Consequently, in a recent work \cite{glaubitz2022SBP}, we developed SBP operators for general functions spaces, referred to as FSBP operators. 
These allowed us to systematically develop stable and high-order accurate numerical methods that are based on general approximation spaces. 
The advantage of using such a method is demonstrated in \cref{fig:linear_source_SAT_K3_I6} for the inhomogeneous linear advection problem 
\begin{equation}\label{eq:linear_inhom} 
\begin{aligned} 
	\partial_t u + \partial_x u & = 2u, \quad && 0 < x < \pi, \\ 
	u(x,0) & = 1, \quad && 0 \leq x \leq \pi, \\ 
	u(0,t) & = 1, \quad && t \geq 0,
\end{aligned} 
\end{equation} 
with exact steady state solution $u(x) = e^{2x}$. 
The steady state solution can be expected to be better approximated using an exponential rather than a polynomial approximation space. 
However, the construction of an FSBP operator that is based on a function space $\mathcal{F}$ requires that we have a positive and $(\mathcal{F} \mathcal{F})'$-exact quadrature, where 
\begin{equation} 
	(\mathcal{F} \mathcal{F})' 
		= \left\{ \, (f g)' \mid f,g \in \mathcal{F} \, \right\}.
\end{equation} 
Such quadrature formulas can be constructed using the generalized LS approach introduced in the present work. 
As an example, consider the following exponential function space on $\Omega = [0,1]$: 
\begin{equation} 
	\mathcal{E}_2 = \Span \{ 1, x, e^{x} \} 
	\quad \text{with} \quad 
	(\mathcal{E}_2\mathcal{E}_2)' = \Span\{ 1, x, e^x, x e^x, e^{2x} \}
\end{equation} 
Using $N=5$ equidistant grid points, we found the LS formula with the following points and weights to be positive and $(\mathcal{E}_2\mathcal{E}_2)'$-exact: 
\begin{equation} 
\begin{aligned}
	\mathbf{x} = \left[0, \frac{1}{4}, \frac{1}{2}, \frac{3}{4}, 1 \right]^T, \quad 
	\mathbf{w} = \left[ \frac{2}{25}, \frac{9}{25}, \frac{3}{25}, \frac{9}{25}, \frac{2}{25} \right]^T,
\end{aligned}
\end{equation} 
where we have rounded the numbers to the second decimal place. 
The corresponding FSBP operator $D$ that was used to generate the numerical results in \cref{fig:linear_source_SAT_K3_I6} is 
\begin{equation} 
\renewcommand*{\arraystretch}{1.3} 
	D \approx
	\begin{bmatrix} 
		-\frac{329}{50} & \frac{859}{100} & -\frac{23}{50} & -\frac{127}{50} & \frac{49}{50}\\ -\frac{9}{5} & 0 & \frac{22}{25} & \frac{29}{20} & -\frac{53}{100}\\ \frac{7}{25} & -\frac{257}{100} & 0 & \frac{129}{50} & -\frac{29}{100}\\ \frac{53}{100} & -\frac{29}{20} & -\frac{89}{100} & 0 & \frac{181}{100}\\ -\frac{49}{50} & \frac{249}{100} & \frac{12}{25} & -\frac{213}{25} & \frac{653}{100} 
	\end{bmatrix}, 
\end{equation}
where we have again rounded the numbers to the second decimal place. 
See \cite{glaubitz2022SBP} for more details. 

The potential advantage of using non-polynomial function spaces to solve time-depenedent differential equations has also been pointed out in several other works. 
For instance, in \cite{kadalbajoo2003exponentially,kalashnikova2009discontinuous}, exponentially fitted schemes were used to solve singular perturbation problems. 
Discontinuous Galerkin methods based on non-polynomial approximation spaces were considered in \cite{yuan2006discontinuous}. 
There are also several works on essentially non-oscillatory (ENO) and weighted ENO (WENO) reconstructions based on non-polynomial function approximations \cite{christofi1996study,iske1996structure,hesthaven2019entropy}. 
In these cases, it would be of potential advantage to use a LS-CF that is exact for the corresponding function space.

\subsection{A Ten-Dimensional Example} 

For high-dimensional problems it becomes impractical to construct LS-CF that are exact for polynomials of an increasing (total) degree $m$. 
This is because the dimension of the corresponding function space $\mathbb{P}_m(\R^d)$ is given by $\dim \mathbb{P}_m(\R^d) = \binom{m+d}{m}$, which implies $\dim \mathbb{P}_m(\R^d) = \mathcal{O}(m^d)$. 
Thus, in a sense, the LS-CFs based on polynomials fall victim to the curse of dimensionality and cannot be applied directly in high dimensions.
However, we now demonstrate how appropriately chosen nonpolynomial function spaces can be used to extend the positive LS-CFs into a high-dimensional setting. 
Another idea, related to variance reduction in MC and QMC methods, will be discussed in \cref{sub:variance_reduction}. 

Instead of a polynomial function space, we consider an RBF function space for the ten-dimensional domain $\Omega = [0,1]^{10}$. 
For sake of simplicity, we use a Gaussian RBF $\varphi(r) = \exp(\varepsilon^2 r^2)$ with shape parameter $\varepsilon = 10^{-2} \sqrt{K}$, where $K$ is the number of centers used to build the RBF space. 
We used ten logarithmically spaced values for $K$ between $1$ and $10^3$. 
The dimension of the RBF space only depends on the number of centers and not on the dimension of the domain $\Omega$. 
We make no claim about the optimality of the above choices of a function space of Gaussian kernels and the chosen shape parameters. 
That said, \cref{fig:10D} demonstrates that even in high dimensions LS-CFs are able to yield more accurate results than the MC and QMC method when these are exact for appropriate RBF spaces. 
Adapting the locations of the centers to some prior knowledge of the function $f$ (e.\,g., $f$ rapidly decays away from the axes)  and using, for instance, sparse grids might further improve the results. 
While this would exceed the scope of this paper, it would also be of interest to combine high-dimensional LS-CF with separable (low-rank) \cite{beylkin2009multivariate,chevreuil2015least} or sparse approximations \cite{chkifa2015breaking,cohen2015approximation,adcock2017compressed}. 

\begin{figure}[tb]
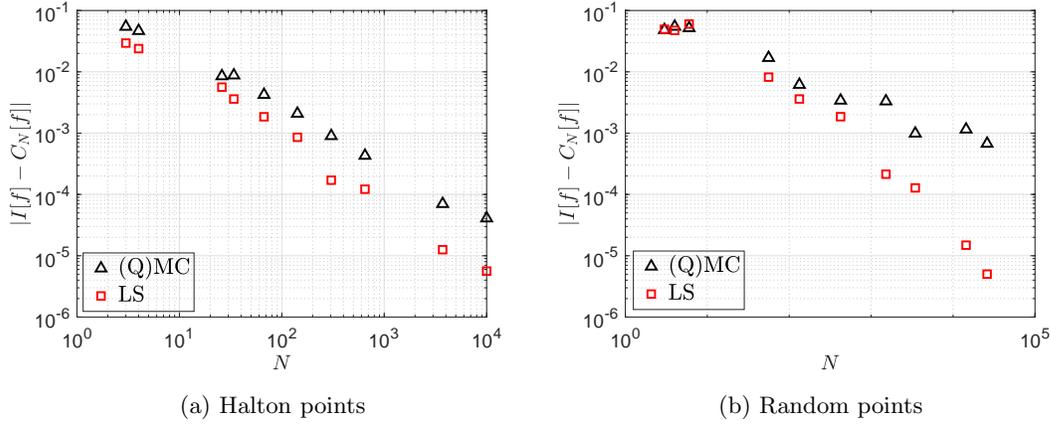

	\centering
  	\begin{subfigure}[b]{0.45\textwidth}
		\includegraphics[width=\textwidth]{%
      		plots/10D_Halton} 
    		\caption{Halton points}
    		\label{fig:10D_Halton}
  	\end{subfigure}%
	~
	\begin{subfigure}[b]{0.45\textwidth}
		\includegraphics[width=\textwidth]{%
      		plots/10D_random} 
    		\caption{Random points}
    		\label{fig:10D_random}
  	\end{subfigure}%
  	\caption{
		Errors for Genz' third test function on $\Omega = [0,1]^{10}$ with $\omega \equiv 1$ using a stable LS-CFs based on Gaussian RBFs and a (Q)MC method on the same points. 
	}
  	\label{fig:10D}
\end{figure}

\subsection{Variance Reduction in MC and QMC Methods} 
\label{sub:variance_reduction}

Another possible applications of LS-CFs to high-dimensional problems is related to the idea of variance reduction in MC and QMC methods \cite{nakatsukasa2018approximate}. 
In the context of the present paper, the idea is to fix the maximum total degree $m$, say $m=1$ or $m=2$, and to only increase the number of data points. 
Henceforth, we refer to such a CF as an LS-QMC formula when random points are used and as an LS-MC formula when a low-discrepancy sequence is used. 
\cref{fig:LSMC} reports on the errors of these formulas for $m=1$ and $m=2$ compared to a usual (Q)MC formula for the ten-dimensional domain $\Omega = [0,1]^{10}$.
We can see that the LS-(Q)MC formulas show the same convergence rate as the (Q)MC formula but with a reduced constant in front of the error. 
A similar observation was made in \cite{nakatsukasa2018approximate}. 
In this context, the present work can be used to ensure positivity of LS-(Q)MC formulas if $N$ is sufficiently larger than the fixed maximum total degree $m$. 
We also refer to \cite[Appendix A]{nakatsukasa2018approximate} for an especially efficient implementation of the LS-(Q)MC formulas. 

\begin{figure}[tb]
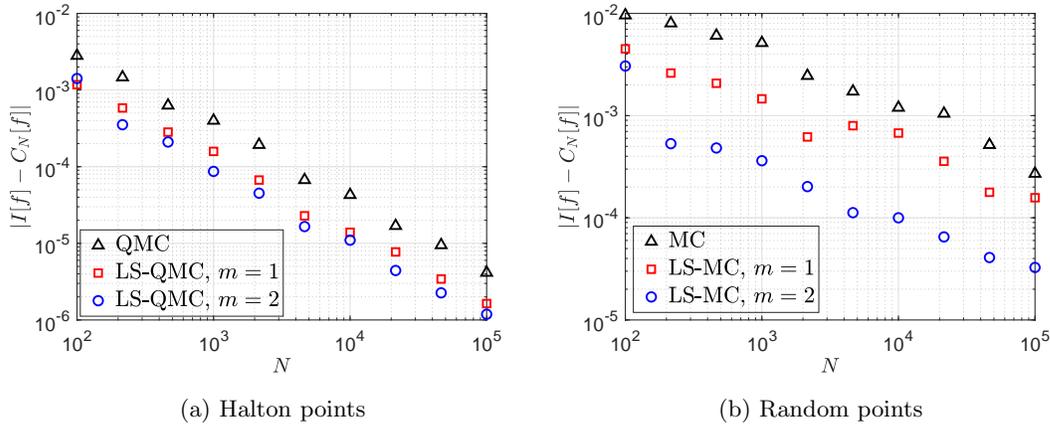

	\centering
  	\begin{subfigure}[b]{0.45\textwidth}
		\includegraphics[width=\textwidth]{%
      		plots/LSMC_Halton} 
    		\caption{Halton points}
    		\label{fig:LSMC_Halton}
  	\end{subfigure}%
	~
	\begin{subfigure}[b]{0.45\textwidth}
		\includegraphics[width=\textwidth]{%
      		plots/LSMC_random} 
    		\caption{Random points}
    		\label{fig:LSMC_random}
  	\end{subfigure}%
  	\caption{
		Errors for Genz' third test function on $\Omega = [0,1]^{10}$ with $\omega \equiv 1$ using a (Q)MC method and the LS-(Q)MC method with $m=1$ and $m=2$ 
	}
  	\label{fig:LSMC}
\end{figure}  
\section{Concluding Thoughts} 
\label{sec:summary} 

We presented a simple procedure to construct provable positive and exact CFs in a general multi-dimensional setting. 
It was proved that under relatively mild restrictions such CFs always exist---and can be determined by the method of LS---when a sufficiently large number of equidistributed data points is used. 
This extends some previous results on LS formulas from one dimension \cite{wilson1970necessary,wilson1970discrete,huybrechs2009stable,glaubitz2020stableQRs} as well as multiple dimensions \cite{glaubitz2020stableCFs} (but restricted to function spaces of algebraic polynomials). 
At the same time, our findings can also be seen as an extension of the stable high-order randomized CFs discussed in \cite{migliorati2020stable}, which are positive and exact with a high probability, into a deterministic framework. 
Furthermore, similarities with certain methods for variance reduction in the context of MC and QMC methods \cite{nakatsukasa2018approximate} should be noted. 
Our results indicate that such high-order corrections are ensured to yield positive formulas if a sufficiently large number of data points is used. 
Finally, it is possible to combine the provable positive and exact LS-CFs with subsampling methods, therefore constructing positive interpolatory CFs.  
While these are already predicted by Tchakaloff's theorem (see \cref{thm:Tchakaloff}), it is often not clear how the data points should be chosen. 
Our findings indicate that some of the positive interpolatory CFs predicted by Tchakaloff are supported on sets of equidistributed points. 
This can be considered as an important---yet sometimes missing---justification and design criterion for CFs constructed based on optimization strategies. 
These include NNLS as well as linear programming approaches.   

In forthcoming works, we will address the application of the LS-CF introduced here to derive SBP operators for general function spaces \cite{glaubitz2022SBP} as well as to construct energy-stable RBF methods, a development that we have already initiated in \cite{glaubitz2021stabilizing,glaubitz2021towards}. 

\appendix 
\section{Steinitz' Method} 
\label{app:Steinitz} 

Given is a $K$-dimensional function space $\mathcal{F}_K(\Omega)$ and a positive and $\mathcal{F}_K(\Omega)$-exact CF,  
\begin{equation}\label{eq:pos-CF}
	C_N[f] = \sum_{n=1}^N w_n f(\mathbf{x}_n).
\end{equation}
Here, $N$ denotes the number of data points in a generic sense.  
If $N> K$, one successively reduces the number of data points until $N \leq K$ by going over to an appropriate subset of data points, while preserving positivity and $\mathcal{F}_K(\Omega)$-exactness. 
Recall that the vector space $\mathcal{F}_K(\Omega)$ has dimension $K$, and so does its algebraic dual space (the space of all linear functionals defined on $\mathcal{F}_K(\Omega)$). 
In particular, among the $N$ linear functionals 
\begin{equation} 
	L_n[f] = f(\mathbf{x}_n), \quad n=1,\dots,N,
\end{equation}
at most $K$ are linearly independent. 
That is, if $N > K$, there exist a vector of coefficients $\mathbf{a} = (a_1, \dots, a_M)^T$ such that 
\begin{equation}\label{eq:vec-a}
	a_1 L_1[f] + \dots + a_N L_N[f] = 0 \quad \forall f \in \mathcal{F}_K(\Omega)
\end{equation}
and $a_n > 0$ for at least one $n$.
Let ${\sigma= \max_{1 \leq n \leq N} a_n/w_n}$. 
Then, $\sigma > 0$, $\sigma w_n - a_n \geq 0$ for all $n$, and $\sigma w_n - a_n = 0$ for at least one $n$. 
From \cref{eq:vec-a} one therefore has 
\begin{equation}
	I[f] = \frac{\sigma w_1 - a_1}{\sigma} L_1[f] + \dots + \frac{\sigma v_N - a_N}{\sigma} L_N[f] 
	\quad \forall f \in \mathcal{F}_K(\Omega).
\end{equation} 
Note that one of the coefficients is zero and, together with the corresponding linear functional (data point), can be removed.
Hence, on $\mathcal{F}_K(\Omega)$, the integral $I$ can be expressed as a linear combination of not more than $N-1$ of the linear functionals $L_1,\dots,L_N$ with positive coefficients. 
Iterating this process, one finally arrives at a positive interpolatory CF with $N \leq K$ while being exact for all $f \in \mathcal{F}_K(\Omega)$. 

\begin{algorithm}[tb]
\caption{The Steinitz Method}
\label{algo:Steinitz}
\begin{algorithmic}[1]
	\While{$K < N$} 
    		\State{Compute $\Phi = \Phi(X)$ and $\text{null}(\Phi)$} 
      	\State{Determine $\mathbf{a} \in \text{null}(\Phi) \setminus \{\mathbf{0}\}$ s.\,t.\ $a_n > 0$ for at least one $n$ (see \cref{rem:a})} 
		\State{Compute $\sigma = \max_{n} a_n/w_n$} 
		\State{Overwrite the cubature weights: $w_n = (\sigma w_n - a_n)/\sigma$} 
		\State{Remove all zero weights as well as the corresponding data points} 
		\State{$N = N -  \#\{ \, w_n \mid w_n = 0, \ n=1,\dots,N \, \}$} 
    \EndWhile 
\end{algorithmic}
\end{algorithm} 

An algorithmic description of the Steinitz method is provided in \cref{algo:Steinitz}. 
Thereby, $\text{null}(\Phi) = \{ \mathbf{a} \in \R^N \mid \Phi \mathbf{a} = \mathbf{0} \}$ denotes the null space of the matrix $\Phi$. 

\begin{remark}\label{rem:a}
Note that \cref{eq:vec-a} is equivalent to $\mathbf{a} \in \text{null}(\Phi)$. 
Essentially every ${\mathbf{a} \in \text{null}(\Phi) \setminus \{\mathbf{0}\}}$ can be used (if $a_n \leq 0$ for all $n=1,\dots,M$, one can go over to $-a$). 
Moreover, it was shown \cref{lem:solution-space} that $\text{null}(\Phi)$ has dimension $N-K$. 
Hence, as long as $N > K$, such a vector of coefficients $\mathbf{a}$ can always be found. 
\end{remark}


\bibliographystyle{siamplain}
\bibliography{literature}

\end{document}